\newtheorem{theorem}{Theorem}
\newtheorem{lemma}{Lemma}
\newtheorem{proposition}{Proposition}
\newtheorem{remark}{Remark}
\newcommand{\norm}[1]{\left\Vert#1\right\Vert}
\newcommand{\abs}[1]{\left\vert#1\right\vert}
\newcommand{\eps}{\varepsilon}
\newcommand{\bsa}{\boldsymbol{a}}
\newcommand{\bsk}{\boldsymbol{k}}
\newcommand{\bsl}{\boldsymbol{l}}
\newcommand{\bsx}{\boldsymbol{x}}
\newcommand{\bsh}{\boldsymbol{h}}
\newcommand{\bst}{\boldsymbol{t}}
\newcommand{\bsb}{\boldsymbol{b}}
\newcommand{\bsy}{\boldsymbol{y}}
\newcommand{\bszero}{\boldsymbol{0}}
\newcommand{\cO}{\mathcal{O}}
\newcommand{\cH}{\mathcal{H}}
\newcommand{\cA}{\mathcal{A}}
\newcommand{\cG}{\mathcal{G}}
\newcommand{\rd}{\,\mathrm{d}}
\newcommand{\NN}{\mathbb{N}}
\newcommand{\ZZ}{\mathbb{Z}}
\newcommand{\RR}{\mathbb{R}}
\newcommand{\APP}{{\rm APP}}
\newcommand{\INT}{{\rm INT}}
\newcommand{\lstd}{\Lambda^{\rm{std}}}
\newcommand{\lall}{\Lambda^{\rm{all}}}
\newcommand{\uu}{\mathfrak{u}}
\newcommand{\vv}{\mathfrak{v}}
\newcommand{\cHcos}{\cH(K_{s,\bsa,\bsb,\omega}^{\cos})}
\newcommand{\Kcos}{K_{s,\bsa,\bsb,\omega}^{\cos}}
\newcommand{\LONG}[1]{}
\begin{document}

\title{Integration and approximation in cosine spaces of smooth functions}

\author{Christian Irrgeher\thanks{C. Irrgeher is supported by the Austrian Science Fund (FWF): Project F5509-N26, which is a part of the Special Research Program "Quasi-Monte Carlo Methods: Theory and Applications".}\;,
Peter Kritzer\thanks{P. Kritzer is supported by the Austrian Science Fund (FWF): Project F5506-N26, which is a part of the Special Research Program "Quasi-Monte Carlo Methods: Theory and Applications".}\;,
Friedrich Pillichshammer\thanks{F. Pillichshammer is partially supported by the Austrian Science Fund (FWF): Project F5509-N26, which is a part of the Special Research Program "Quasi-Monte Carlo Methods: Theory and Applications".}}

\date{}

\maketitle

\begin{abstract}
We study multivariate integration and approximation for functions belonging to a weighted
reproducing kernel Hilbert space based on half-period cosine functions in the worst-case setting.
The weights in the norm of the function space depend on two sequences of real numbers and decay exponentially.
As a consequence the functions are infinitely often differentiable, and therefore
it is natural to expect exponential convergence of the worst-case error.
We give conditions on the weight sequences under which we have exponential convergence for the integration as well as the approximation problem.
Furthermore, we investigate the dependence of the errors on the dimension by considering various notions of tractability.
We prove sufficient and necessary conditions to achieve these tractability notions.
\end{abstract}

\noindent\textbf{Keywords:} numerical integration, function approximation, cosine space, worst-case error, exponential convergence, tractability

\noindent\textbf{2010 MSC:} 41A63, 41A25, 65C05, 65D30, 65Y20

\section{Introduction}\label{sec:introduction}

In this paper we study two instances of multivariate linear problems. We are interested in approximating linear operators $S_s: \cH_s\rightarrow \cG_s$, where $\cH_s$ is a certain Hilbert space of $s$-variate functions defined on $[0,1]^s$ and where $\cG_s$ is a normed space, namely:
\begin{itemize}
\item Numerical integration of functions $f\in\cH_s$: In this case, we have $\cG_s=\RR$ and
\begin{align*}
S_s(f)=\INT_s(f)=\int_{[0,1]^s} f(\bsx)\rd\bsx,
\end{align*}
or
\item $L_2$-approximation of functions $f\in \cH_s$: In this case, we have $\cG_s=L_2 ([0,1]^s)$ and
\begin{align*}
S_s(f)=\APP_s(f)=f.
\end{align*}
\end{itemize}
Without loss of generality, see, e.g., \cite{TWW88} or \cite[Section~4]{NW08}, we approximate $S_s$ by a linear algorithm $A_{n,s}$ using $n$ information evaluations which are given by linear functionals from the class $\Lambda\in\{\lall,\lstd\}$. More precisely, we approximate $S_s$ by algorithms of the form
\begin{align*}
A_{n,s}(f)=\sum_{j=1}^n \alpha_j L_j(f)\qquad\textnormal{for all}\quad f\in \cH_s,
\end{align*}
where $L_j\in \Lambda$ and $\alpha_j\in\cG_s$ for all $j=1,2,\dots,n$. For $\Lambda=\lall$ we have $L_j\in\cH_s^*$ whereas for $\Lambda=\lstd$ we have $L_j(f)=f(x_j)$ for all $f\in\cH_s$, and for some $x_j\in[0,1]^s$. Obviously, for multivariate integration only the class $\lstd$ makes sense. Furthermore, we remark that in this paper we consider only function spaces for which $\lstd\subset\lall$.

We measure the error of an algorithm $A_{n,s}$ in terms of the \textit{worst-case error}, which is defined as
\begin{align*}
e(A_{n,s},S_s):=\sup_{\substack{f \in \cH_s \\ \|f\|_{\cH_s} \le 1}} \norm{S_s(f)-A_{n,s}(f)}_{\cG_s},
\end{align*}
where $\norm{\cdot}_{\cH_s}$, $\norm{\cdot}_{\cG_s}$ denote the norms in $\cH_s$ and $\cG_s$, respectively. The \textit{$n$th minimal (worst-case) error} is given by
\begin{align*}
e(n,S_s):=\inf_{A_{n,s}} e(A_{n,s},S_s),
\end{align*}
where the infimum is taken over all admissible algorithms $A_{n,s}$. When we want to emphasize that the $n$th minimal error is taken with respect to algorithms using information from the class $\Lambda\in\{\lall,\lstd\}$, we write $e(n,S_s;\Lambda)$. For $n=0$, we consider algorithms that do not use information evaluations and therefore we use $A_{0,s}\equiv 0$. The error of $A_{0,s}$ is called the \textit{initial (worst-case) error} and is given by
\begin{align*}
e(0,S_s):=\sup_{\substack{f \in \cH_s \\ \|f\|_{\cH_s} \le 1}} \norm{S_s(f)}_{\cG_s}=\norm{S_s}.
\end{align*}

Since we will study a class of weighted reproducing kernel Hilbert spaces with exponentially decaying weights, which will be introduced in Section \ref{sec:cosine}, we are concerned with spaces $\cH_s$ of smooth functions. We remark that reproducing kernel Hilbert spaces of a similar flavor were previously considered in~\cite{DKPW13,DLPW11,IKLP15,IKPW15,IKPW15a,KPW14,KPW14a}. In this case it is natural to expect that, by using suitable algorithms, we should be able to obtain errors that converge to zero very quickly as $n$ increases, namely exponentially fast. By {\em exponential convergence} (EXP) for the worst-case error we mean that there exist a number $q\in(0,1)$ and functions $p,C,M:\NN\to (0,\infty)$ such that
\begin{align}\label{exrate}
e(n,S_s)\le C(s)\, q^{\,(n/M(s))^{\,p(s)}}\qquad\textnormal{for all} \quad s, n\in \NN.
\end{align}
If the function $p$ in \eqref{exrate} can be taken as a constant function, i.e., $p(s)=p>0$ for all $s\in\NN$, we say that we achieve \emph{uniform exponential convergence} (UEXP) for $e(n,S_s)$. Furthermore, we denote by $p^*(s)$ and $p^*$ the largest possible rates $p(s)$ and $p$ such that EXP and UEXP holds, respectively.

When studying algorithms $A_{n,s}$, we do not only want to control how their errors depend on $n$, but also how they  depend on the dimension $s$. This is of particular importance for high-dimensional problems. To this end, we define, for $\eps\in(0,1)$ and $s\in\NN$, the {\em information complexity} by
\begin{align*}
  n(\eps,S_s):=\min\left\{n\,:\,e(n,S_s)\leq\eps\,e(0,S_s) \right\}
\end{align*}
as the minimal number of information evaluations needed to reduce the initial error by a factor of $\eps$. Exponential convergence implies that asymptotically, with respect to $\eps$ tending to zero, we need $\mathcal{O}(\log^{1/p(s)} \eps^{-1})$ information evaluations to compute an $\eps$-approximation. However, it is not clear how long we have to wait to see this nice asymptotic behavior especially for large $s$. This, of course, depends on how $C(s)$, $M(s)$ and $p(s)$ depend on $s$, and this is the subject of tractability. Thus, we intend to study how the information complexity depends on $\log\eps^{-1}$ and $s$ by considering the following tractability notions, which were already considered in \cite{DKPW13,DLPW11,IKLP15, IKPW15, KPW14}. The nomenclature was introduced in \cite{KPW14a}. We say that we have {\em Exponential Convergence-Weak Tractability (EC-WT)} if
\begin{align*}
\lim_{s+\log\,\eps^{-1}\to\infty}\frac{\log\ n(\eps,S_s)}{s+\log\,\eps^{-1}}=0
\end{align*}
with the convention $\log\,0=0$, i.e., we rule out the cases for which $n(\eps,s)$ depends exponentially on $s$ and $\log\,\eps^{-1}$. If there exist numbers $c,\tau,\sigma>0$ such that
\begin{align}\label{eq:ECPT}
n(\varepsilon,S_s)\le c\,s^{\,\sigma}\,(1+\log\,\eps^{-1})^{\,\tau}\qquad\textnormal{for all}\quad s\in\NN,\ \eps\in(0,1),
\end{align}
we say that {\em Exponential Convergence-Polynomial Tractability (EC-PT)} holds. This means that the information complexity depends at most polynomially on $s$ and $\log\,\eps^{-1}$. If the upper bound is independent of the dimension $s$, i.e., \eqref{eq:ECPT} holds with $\sigma=0$, then we say that we have {\em Exponential Convergence-Strong Polynomial Tractability (EC-SPT)}. Furthermore, the infimum of all $\tau$ for which EC-SPT holds is called the {\em exponent of EC-SPT} and it is denoted by $\tau^*$.

For further general remarks on exponential convergence and these tractability notions we refer to \cite{DKPW13, DLPW11, IKLP15, IKPW15, KPW14, KPW14a}. Moreover, we remark  that in many papers tractability is studied for problems where we do not have exponential, but usually polynomial, error convergence. For this kind of problems, tractability is concerned with the question how the information complexity depends on $s$ and $\eps^{-1}$, rather than $\log \eps^{-1}$, (for a detailed survey of such results, we refer to~\cite{NW08,NW10,NW12}).

\subsection{Relations between multivariate integration and approximation}\label{secrelintapp}

It is well known that multivariate approximation using information from $\Lambda^\mathrm{std}$ is not easier than multivariate integration, see, e.g., \cite{NSW04}. More precisely, for any algorithm $A_{n,s}(f)=\sum_{k=1}^n \alpha_k f(\bsx_k)$ for multivariate approximation using the nodes $\bsx_1,\ldots,\bsx_n\in [0,1)^s$ and $\alpha_k\in L_2([0,1]^s)$, define $\beta_k:=\int_{[0,1]^s} \alpha_k (\bsx)\rd \bsx$ and the algorithm
\begin{align*}
A^{\rm int}_{n,s}(f)=\sum_{k=1}^n\beta_k\,f(\bsx_k)
\end{align*}
for multivariate integration. Then it can easily be checked that
\begin{align*}
e(A^{\rm int}_{n,s},\INT_s)\le e(A_{n,s},\APP_s).
\end{align*}
Since this holds for all algorithms $A_{n,s}$ we conclude that
\begin{align*}
e(n,\INT_s)\le e(n,\APP_s;\lstd).
\end{align*}
If the initial errors of integration and approximation are equal, these observations imply that for $\eps\in(0,1)$ and $s\in\NN$ we have
\begin{align}\label{eq:intapp2}
n(\eps,\INT_s) \le n(\eps,\APP_s;\lstd).
\end{align}

\subsection{The half-period cosine spaces with infinite smoothness}\label{sec:cosine}

We now introduce the function spaces for which we will consider multivariate integration and approximation. First, we choose two weight sequences of real positive numbers, $\bsa=\{a_j\}$ and $\bsb=\{b_j\}$ such that
\begin{align*}
0<a_*:=a_1\leq a_2\leq\ldots\quad\textnormal{ and }\qquad b_{\ast}:=\inf_j b_j>0.
\end{align*}
Moreover, we fix a parameter $\omega\in(0,1)$ and define for any $\bsk=(k_1,\ldots,k_s)\in\NN_0^s$,
\begin{align*}
\omega_{\bsk}:=\omega^{\sum_{j=1}^s a_j k_j^{b_j}}=\prod_{j=1}^{s}\omega^{a_j k_j^{b_j}}.
\end{align*}

The {\it half-period cosine space with infinite smoothness} is a reproducing kernel Hilbert space $\cHcos$ of (half-period) cosine series with reproducing kernel
\begin{align}\label{eq:kernel}
K_{s,\bsa,\bsb,\omega}^{\cos}(\bsx,\bsy):=\sum_{\bsk\in\NN_0^s}\omega_{\bsk}\,2^{\frac{\abs{\bsk}_*}{2}}2^{\frac{\abs{\bsk}_*}{2}} \prod_{j=1}^s \cos(\pi k_j x_j)\cos (\pi k_j y_j),\qquad\textnormal{for } \bsx, \bsy\in [0,1]^s,
\end{align}
where we define $|\bsk|_* := |\{j \in [s]\, : \, k_j \not=0\}|$ to be the number of nonzero components in $\bsk=(k_1,\ldots,k_s)\in\NN_0^s$, and $[s]:=\{1,\ldots,s\}$. The inner product in $\cHcos$ is given by
\begin{align*}
\langle f, g \rangle_{K_{s,\bsa,\bsb,\omega}^{\cos}} = \sum_{\bsk \in \mathbb{N}_0^s}\omega_{\bsk}^{-1} \widetilde{f}(\bsk) \, \widetilde{g}(\bsk),
\end{align*}
where the $\bsk$th cosine coefficient for a function $f:[0,1]^s \to\mathbb{R}$ is given by
\begin{align*}
\widetilde{f}(\bsk):=\int_{[0,1]^s} f(\bsx) \, 2^{\frac{|\bsk|_*}{2}} \prod_{j=1}^s \cos(\pi k_j x_j) \,\mathrm{d} \bsx.
\end{align*}
The corresponding norm is defined by $\|f\|_{K_{s,\bsa,\bsb,\omega}^{\cos}} = \sqrt{\langle f, f\rangle_{K_{s,\bsa,\bsb,\omega}^{\cos}}}$, in particular,
\begin{align}\label{normcossp}
  \|f\|_{K_{s,\bsa,\bsb,\omega}^{\cos}}^2=\sum_{\bsk \in \mathbb{N}_0^s} \frac{|\widetilde{f}(\bsk)|^2}{\omega_{\bsk}}=\sum_{\bsh \in \mathbb{Z}^s} 2^{-|\bsh|_*} \frac{|\widetilde{f}(|\bsh|)|^2}{\omega_{\bsh}},
\end{align}
where $|\bsh|=(|h_1|, \ldots, |h_s|)$. Furthermore, we can write $f\in\cHcos$ as
\begin{align*}
f(\bsx)=\sum_{\bsk\in\NN_0^s}\widetilde{f} (\bsk) \left(2^{\frac{\abs{\bsk}_*}{2}} \prod_{j=1}^s \cos (\pi k_j x_j) \right),
\end{align*}
and the sequence $\{e_{\bsk}\}_{\bsk\in\NN_0^s}$ with
\begin{align*}
e_{\bsk}(\bsx)=\left(2^{\frac{\abs{\bsk}_*}{2}} \prod_{j=1}^s \cos (\pi k_j x_j) \right)\omega_{\bsk}^{\frac{1}{2}}
\end{align*}
is an orthonormal basis of $\cHcos$.

From \eqref{normcossp} we see that the functions in $\cHcos$ are characterized by the decay rate of their cosine coefficients, which is regulated by the weights $\omega_{\bsk}$. Since $\{\omega_{\bsk}\}$ decreases exponentially as $\bsk$ grows, the cosine coefficients of the functions in $\cHcos$ also decrease exponentially fast, which influences the smoothness of the functions. Indeed, it can be shown that $f \in \cHcos$ belongs to the Gevrey class with index $1/b_*$ and so it follows that for the case $b_*\geq 1$ the functions of $\cHcos$ are analytic. Note that the half-period cosine space with infinite smoothness fits into the general setting which is discussed in \cite{IKPW15}. Furthermore we refer to the recent paper~\cite{DNP14} in which the integration problem is discussed in a half-period cosine space with weights of polynomial decay.

Finally, we remark that we can assume $a_*\geq 1$ without loss of generality, because we always can modify $\omega$ such that $a_*\geq 1$.

\section{Integration in half-period cosine spaces}

Let $\INT_s: \cHcos\to \RR$ with $\INT_s(f)=\int_{[0,1]^s}f(\bsx) \rd \bsx$. In order to approximate $\INT_s$ with respect to the absolute value $|\cdot|$ we use linear algorithms $A_{n,s}$, which are algorithms based on $n$ function evaluations of the form
\begin{align*}
A_{n,s}(f)=\sum_{k=1}^n \alpha_k f(\bsx_k)\quad \mbox{for}\ f\in\cHcos,
\end{align*}
where each $\alpha_k \in \RR$ and $\bsx_k \in [0,1]^s$ for $k=1,2,\ldots,n$.

The worst-case error of an algorithm $A_{n,s}$ is defined as
\begin{align*}
e(A_{n,s},\INT_s):=\sup_{\substack{f\in\cHcos\\ \norm{f}_{\Kcos\le 1}}}\vert\INT_s(f)- A_{n,s} (f)\vert.
\end{align*}
The $n$th minimal worst-case error is denoted by
\begin{align*}
e(n,\INT_s)=\inf_{A_{n,s}} e(A_{n,s},\INT_s),
\end{align*}
where the infimum is taken over all linear algorithms $A_{n,s}$ of the above given form. With \cite[Proposition~2.11]{DP10} it is easy to check the initial error is $e(0,\INT_s)=1$.
%For $n=0$, we simply approximate $\INT_s(f)$ by zero, and the initial error is
%\begin{align*}
%e(0,S_s)=\int_{[0,1]^s} \int_{[0,1]^s} K_{s,\bsa,\bsb,\omega}^{\cos}(\bsx,\bsy) \rd \bsx \rd \bsy=1,
%\end{align*}
%where we used. This means that the integration problem is well normalized and that the absolute and the normalized error criterions coincide.

\subsection{Upper and lower error bounds}

An $n$-point midpoint rule is a linear integration rule of the form
\begin{align*}
T_n(f)=\frac{1}{n}\sum_{j=0}^{n-1} f\left(\frac{2 j+1}{2 n}\right).
\end{align*}

The following lemma on midpoint rules applied to cosine functions has a simple proof which we omit for the sake of brevity.

\begin{lemma}\label{le1}
We have
\begin{align*}
\left|\frac{1}{n}\sum_{j=0}^{n-1} \cos\left(\pi l \frac{2 j+1}{2n}\right)\right|= \begin{cases} 1 & \textnormal{if } l=2tn\textnormal{ for }\ t \in \ZZ,\\0 & \textnormal{otherwise}.
\end{cases}
\end{align*}
\end{lemma}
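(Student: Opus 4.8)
The plan is to reduce the trigonometric sum to a finite geometric series by writing each cosine as the real part of a complex exponential. Setting $\zeta := \ee^{\icomp \pi l/n}$, one has
\begin{align*}
\sum_{j=0}^{n-1}\cos\left(\pi l\frac{2j+1}{2n}\right) = \Re\left(\ee^{\icomp \pi l/(2n)}\sum_{j=0}^{n-1}\zeta^j\right),
\end{align*}
since the exponent splits as $\pi l(2j+1)/(2n) = \pi l/(2n) + \pi l j/n$. Everything then hinges on whether the ratio $\zeta$ equals $1$. Because $\zeta = 1$ is equivalent to $\pi l/n \in 2\pi\ZZ$, that is, to $l = 2tn$ for some $t\in\ZZ$, this dichotomy matches exactly the two cases in the statement.

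First I would treat the resonant case $l = 2tn$ directly, without the geometric series: here $\cos(\pi l(2j+1)/(2n)) = \cos(\pi t(2j+1)) = \cos(2\pi t j + \pi t) = (-1)^t$ for every $j$, since $2\pi t j$ is an integer multiple of $2\pi$. Hence the average over $j$ equals $(-1)^t$, whose absolute value is $1$, as claimed.

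In the remaining case $\zeta\neq 1$, I would sum the geometric series as $\sum_{j=0}^{n-1}\zeta^j = (\zeta^n-1)/(\zeta-1)$ and use $\zeta^n = \ee^{\icomp \pi l} = (-1)^l$. Factoring half-angles out of the numerator and the denominator (writing $\ee^{\icomp\theta}-1 = \ee^{\icomp\theta/2}\cdot 2\icomp\sin(\theta/2)$), the stray phase $\ee^{\icomp\pi l/(2n)}$ cancels and the exponential sum collapses to $\ee^{\icomp \pi l/2}\,\sin(\pi l/2)/\sin(\pi l/(2n))$; this manipulation is legitimate precisely because $\sin(\pi l/(2n))\neq 0$ exactly when $\zeta\neq 1$. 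Taking the real part and using $\cos(\pi l/2)\sin(\pi l/2) = \tfrac12\sin(\pi l) = 0$ for integer $l$ then yields $0$, which simultaneously covers the even (non-resonant) and the odd values of $l$.

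The computation is short, and the only genuine obstacle is bookkeeping: one must isolate the degenerate case $\zeta = 1$ before dividing by $\zeta - 1$, and one must carry out the half-angle factorisation carefully enough to see both the cancellation of $\ee^{\icomp\pi l/(2n)}$ and the emergence of the vanishing factor $\sin(\pi l)$. I expect no conceptual difficulty beyond this.
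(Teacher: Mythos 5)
Your proof is correct. The paper gives no proof of this lemma at all (it is explicitly omitted ``for the sake of brevity''), so there is nothing to diverge from; your reduction to a finite geometric series in $\ee^{\icomp\pi l/n}$ --- with the resonant case $l=2tn$ computed directly as $(-1)^t$, the dichotomy $\zeta=1$ iff $l=2tn$ correctly identified, the half-angle factorisation cancelling the phase $\ee^{\icomp\pi l/(2n)}$, and the conclusion from $\cos(\pi l/2)\sin(\pi l/2)=\tfrac12\sin(\pi l)=0$ for integer $l$ --- is exactly the standard ``simple proof'' the authors presumably had in mind, and every step checks out.
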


For integration in the multivariate case, we use the cartesian product of one-dimensional midpoint rules. Let $n_1,\ldots,n_s \in \NN$ and let $n=n_1 n_2\cdots n_s$. For $j=1,2,\ldots,s$ let $T_{n_j}^{(j)}(f)=\frac{1}{n_j}\sum_{i=0}^{n_j-1} f\left(\frac{2 i+1}{2 n_j}\right)$ be one-dimensional $n_j$-point midpoint rules. Then we apply the $s$-dimensional Cartesian product rule $T_{n,s}=T_{n_1} \otimes \cdots \otimes T_{n_s}$, i.e.,
\begin{align}\label{eq:midpoints}
T_{n,s}(f)=\frac{1}{n}\sum_{i_1=0}^{n_1-1}\ldots \sum_{i_s=0}^{n_s-1} f\left(\tfrac{2 i_1+1}{2 n_1},\ldots,\tfrac{2 i_s+1}{2 n_s}\right) \ \ \ \mbox{ for } \ f \in \cHcos.
\end{align}

\begin{proposition}\label{prop:wcints}
Let $T_{n,s}$ be the $s$-dimensional Cartesian product of $n_j$-point midpoint rules given by~\eqref{eq:midpoints} and let $n=n_1 n_2\cdots n_s$. Then we have%\footnote{Here we use $a_{\ast} \ge 1$ and $b_{\ast}\ge 1$. A similar result holds for $a_{\ast}>0$ and $b_{\ast}>0$ in which the term $1/(1-\omega)$ has to be replaced by some $C(a_{\ast},b_{\ast})>0$ which depends only on $a_{\ast}$ and $b_{\ast}$.}
\begin{align*}
e^2(T_{n,s},\INT_s) \leq -1+\prod_{j=1}^s \left(1+\omega^{a_j (2 n_j)^{b_j}} C(a_*,b_*)\right)
\end{align*}
with $C(a_*,b_*):=2\omega^{-a_*} \sum_{l=1}^\infty \omega^{a_* l^{b_*}}<\infty$.
\end{proposition}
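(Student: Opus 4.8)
The plan is to begin from the standard representation of the squared worst-case integration error in a reproducing kernel Hilbert space (the same tool behind the computation of the initial error via \cite[Proposition~2.11]{DP10}). For a rule $A_{n,s}(f)=\sum_{k=1}^n\alpha_k f(\bsx_k)$ one has
\begin{align*}
e^2(A_{n,s},\INT_s)=\int_{[0,1]^s}\!\int_{[0,1]^s}\Kcos(\bsx,\bsy)\rd\bsx\rd\bsy-2\sum_{k=1}^n\alpha_k\int_{[0,1]^s}\Kcos(\bsx_k,\bsy)\rd\bsy+\sum_{k,l=1}^n\alpha_k\alpha_l\Kcos(\bsx_k,\bsx_l).
\end{align*}
I would evaluate the three terms by inserting the cosine series \eqref{eq:kernel} and using the elementary fact that $\int_0^1\cos(\pi k x)\rd x$ equals $1$ for $k=0$ and $0$ otherwise. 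This kills every frequency $\bsh\neq\bszero$ in the first two terms, so that the double integral equals $\omega_{\bszero}=1$ (recovering $e(0,\INT_s)=1$) and each single integral equals $1$ as well; for the midpoint weights $\alpha_k=1/n$ the middle term therefore contributes exactly $-2$.

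The heart of the computation is the diagonal term. After inserting \eqref{eq:kernel} and exchanging the (absolutely convergent) summation over $\bsh$ with the summation over the nodes, the tensor-product structure of both the kernel and the Cartesian node set makes the expression factorize over the coordinates, giving
\begin{align*}
\sum_{k,l=1}^n\alpha_k\alpha_l\Kcos(\bsx_k,\bsx_l)=\sum_{\bsh\in\NN_0^s}\omega_{\bsh}\,2^{\abs{\bsh}_*}\prod_{j=1}^s\left(\frac{1}{n_j}\sum_{i=0}^{n_j-1}\cos\!\left(\pi h_j\tfrac{2i+1}{2n_j}\right)\right)^2.
\end{align*}
Here Lemma~\ref{le1} is decisive: each coordinate factor equals $1$ if $h_j=2n_j t_j$ for some $t_j\in\NN_0$ and vanishes otherwise. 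Reindexing $h_j=2n_j t_j$, noting that $\abs{\bsh}_*$ counts the indices with $t_j\neq0$, and absorbing the factor $2^{\abs{\bsh}_*}$ I would arrive at the exact identity
\begin{align*}
e^2(T_{n,s},\INT_s)=-1+\prod_{j=1}^s\left(1+2\sum_{t=1}^\infty\omega^{a_j(2n_j t)^{b_j}}\right).
\end{align*}

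It remains to bound the inner tail $2\sum_{t\ge1}\omega^{a_j(2n_j t)^{b_j}}$ by $\omega^{a_j(2n_j)^{b_j}}C(a_*,b_*)$, and this is the step I expect to require the most care, since the resulting constant must come out independent of $j$ and of $n_j$. Writing $\omega^{a_j(2n_j t)^{b_j}}=\omega^{a_j(2n_j)^{b_j}}\,\omega^{a_j(2n_j)^{b_j}(t^{b_j}-1)}$, I would use that $2n_j\ge2$ and $b_j\ge b_*>0$ give $a_j(2n_j)^{b_j}\ge a_*$, while $a_j\ge a_*$ together with $t^{b_j}\ge t^{b_*}$ for $t\ge1$ yields the exponent estimate $a_j(2n_j)^{b_j}(t^{b_j}-1)\ge a_*(t^{b_*}-1)\ge0$. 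Since $\omega\in(0,1)$, a larger exponent produces a smaller power, hence $\omega^{a_j(2n_j t)^{b_j}}\le\omega^{a_j(2n_j)^{b_j}}\omega^{a_*(t^{b_*}-1)}$. Summing over $t\ge1$ factors out $\omega^{a_j(2n_j)^{b_j}}$ and leaves $2\omega^{-a_*}\sum_{t\ge1}\omega^{a_*t^{b_*}}=C(a_*,b_*)$, which is finite because $b_*>0$ forces $t^{b_*}\to\infty$ and $\omega\in(0,1)$ then guarantees at least stretched-exponential decay of the summands. Plugging this bound into each factor of the product yields the claimed inequality.
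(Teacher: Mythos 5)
Your proposal is correct and follows essentially the same route as the paper: the worst-case error formula for the kernel \eqref{eq:kernel} (the paper cites \cite[Proposition~2.11]{DP10}, which is exactly your three-term expansion), factorization over coordinates via the tensor-product structure, Lemma~\ref{le1} to identify the surviving frequencies $h_j=2n_jt_j$, and the same tail estimate $a_j(2n_j)^{b_j}(t^{b_j}-1)\ge a_*(t^{b_*}-1)$ to extract the factor $\omega^{a_j(2n_j)^{b_j}}C(a_*,b_*)$. In fact you spell out the final bound in more detail than the paper does, and you correctly observe that the intermediate formula holds with equality rather than the paper's conservative inequality.
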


\begin{proof}
With \cite[Proposition 2.11]{DP10} and \eqref{eq:kernel} we get for the worst-case error of $T_{n,s}$ in $\cHcos$,
\begin{align*}
e^2(T_{n,s},\INT_s)&= \sum_{\bsl \in \NN_0^s \setminus \{\bszero\}} \omega_{\bsl} 2^{|\bsl|_0}\left(\frac{1}{n_1}\sum_{j_1=0}^{n_1-1} \ldots \frac{1}{n_s}\sum_{j_s=0}^{n_s-1}  \cos\left(\pi l_1 \frac{2 j_1+1}{2n_1}\right) \cdots \cos\left(\pi l_s \frac{2 j_s+1}{2n_s}\right) \right)^2\\
&= -1 +\prod_{i=1}^s\left(1+ 2 \sum_{l=1}^\infty \omega^{a_i l^{b_i}} \left(\frac{1}{n_i}\sum_{j=0}^{n_i-1} \cos\left(\pi l \frac{2 j+1}{2n_i}\right)\right)^2 \right).
\end{align*}
Now it follows from Lemma~\ref{le1} that
\begin{align*}
e^2(T_{n,s},\INT_s) &\leq -1 +\prod_{j=1}^s\left(1+ 2 \sum_{l=1}^\infty \omega^{a_j (2ln_j)^{b_j}}\right)\\
%&\leq-1 +\prod_{j=1}^s\left(1+ 2\omega^{a_j (2n_j)^{b_j}} \sum_{l=1}^\infty \omega^{a_j (2ln_j)^{b_j} - a_j (2n_j)^{b_j}}\right)\\
% &\leq-1 +\prod_{j=1}^s\left(1+ 2\omega^{a_j (2n_j)^{b_j}} \sum_{l=1}^\infty \omega^{a_j (l^{b_j}-1)}\right)\\
%&\leq-1 +\prod_{j=1}^s\left(1+ 2\omega^{a_j (2n_j)^{b_j}} \sum_{l=1}^\infty \omega^{a_* (l^{b_*}-1)}\right)\\
&\leq-1 +\prod_{j=1}^s\left(1+ 2\omega^{a_j (2n_j)^{b_j}} \omega^{-a_*}\sum_{l=1}^\infty \omega^{a_* l^{b_*}}\right).
%&\leq-1+\prod_{j=1}^s \left(1+\omega^{a_j (2 n_j)^{b_j}} C(a_*,b_*)\right),
\end{align*}
%where $C(a_*,b_*):=2\omega^{-a_*}\omega^{-a_*}\sum_{l=1}^\infty \omega^{a_* l^{b_*}}<\infty$.
\end{proof}

The following proposition states a lower bound on the $n$th minimal worst-case error which will be useful to derive the necessary conditions in Theorem \ref{thm:intmain}.

\begin{proposition}\label{prop:lower}
Let $t_1,\ldots,t_s\in\NN$. Then the $n$th minimal worst-case error satisfies
\begin{align*}
e(n,\INT_s)\ge  \omega^{\sum_{j=1}^s a_j (2t_j)^{b_j}}\prod_{j=1}^s (5(1+ t_j))^{-1}
\end{align*}
for any $1\le n\leq \prod_{j=1}^s (1+t_j)$.
\end{proposition}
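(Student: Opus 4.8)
The plan is to argue by a fooling-function (radius-of-information) lower bound. Fix any admissible algorithm $A_{n,s}(f)=\sum_{k=1}^n\alpha_k f(\bsx_k)$ with nodes $\bsx_1,\dots,\bsx_n\in[0,1]^s$. If I can produce a function $g\in\cHcos$ with $\|g\|_{\Kcos}\le1$, with $g(\bsx_k)=0$ for all $k$, and with $|\INT_s(g)|$ at least the claimed quantity, then $A_{n,s}(g)=\sum_k\alpha_k g(\bsx_k)=0$, so that $e(A_{n,s},\INT_s)\ge|\INT_s(g)-A_{n,s}(g)|=|\INT_s(g)|$. Since the bound I aim for will depend only on $t_1,\dots,t_s$ and not on the particular node configuration, taking the infimum over all admissible $A_{n,s}$ yields the proposition. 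Everything therefore reduces to constructing, for an arbitrary set of $n\le\prod_{j=1}^s(1+t_j)$ points, a normalized function that is annihilated by all point evaluations but has a large integral.

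For the existence of a nonzero fooling function I would work in the finite-dimensional subspace $V\subset\cHcos$ spanned by the orthonormal functions $e_{\bsk}$ with $0\le k_j\le 2t_j$ for every $j$. Its dimension is $\prod_{j=1}^s(1+2t_j)$, which is strictly larger than $\prod_{j=1}^s(1+t_j)\ge n$ because each $t_j\ge1$. Hence the common kernel in $V$ of the $n$ point-evaluation functionals $f\mapsto f(\bsx_k)$ has dimension at least $\prod_j(1+2t_j)-n\ge1$, so a nonzero $g\in V$ with $g(\bsx_k)=0$ for all $k$ exists; enlarging the frequency range from $t_j$ to $2t_j$ is precisely what keeps this true even in the boundary case $n=\prod_j(1+t_j)$. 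Since $g$ involves only frequencies $k_j\le 2t_j$, the smallest value of $\omega_{\bsk}$ occurring in the norm $\|g\|_{\Kcos}^2=\sum_{\bsk}|\widetilde g(\bsk)|^2/\omega_{\bsk}$ from \eqref{normcossp} is $\omega^{\sum_j a_j(2t_j)^{b_j}}$, and this is the source of the exponential factor in the claimed bound.

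The main obstacle is not existence but the lower bound on $|\INT_s(g)|$: a generic element of the common kernel may integrate to zero, so $g$ must be chosen inside this kernel so that its integral is comparable to $\prod_{j=1}^s(5(1+t_j))^{-1}$. Writing $\INT_s(f)=\langle f,\bsone\rangle_{\Kcos}$ with $\bsone$ the constant function (the representer of integration, of unit norm), the sharp quantity to control is the norm of the projection of $\bsone$ onto the annihilator of the evaluations. I expect the cleanest route is to exhibit $g$ explicitly through products of one-dimensional trigonometric polynomials $P_j(\cos(\pi x_j))$ of degree at most $2t_j$: in a single variable such a polynomial can be made to vanish at prescribed abscissae while keeping the mean $\int_0^1 P_j(\cos(\pi x_j))\rd x_j$ bounded away from zero relative to its cosine-norm, and the constant $5$ per coordinate is exactly the price of this mean-to-norm estimate. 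The delicate part will be the $s$-dimensional bookkeeping: arbitrary nodes are not grid-aligned, so the zeros of the one-dimensional factors must be distributed across the coordinates by a counting argument tied to $n\le\prod_j(1+t_j)$, after which the product structure of both the norm and the integral can be used to assemble the one-dimensional estimates into the stated product bound.
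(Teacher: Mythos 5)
Your overall framework (a fooling function that vanishes at the nodes, then a lower bound on its integral relative to its norm) is the same one underlying the proofs the paper cites (Lemma~1 of KPW14, Theorem~2 of IKLP15), but your proposal has a genuine gap at exactly the point you yourself flag as ``the main obstacle'': you never obtain the lower bound on $|\INT_s(g)|$, and neither of your two routes can deliver it. The dimension count in the span of frequencies $k_j\le 2t_j$ produces a nonzero annihilated $g$ but gives no quantitative information whatsoever about $\INT_s(g)$: the projection of the representer of integration onto the annihilator of the point evaluations could a priori be arbitrarily small, so existence alone proves nothing. Your proposed remedy, a product $\prod_{j=1}^s P_j(\cos(\pi x_j))$ with $\deg P_j\le 2t_j$, is structurally too weak to even vanish at the nodes: such a product vanishes at a node only if some univariate factor vanishes at the corresponding coordinate, so its zero set meets the node set in at most $\sum_{j=1}^s 2t_j$ points whenever the nodes are in general position (all coordinate values pairwise distinct, so each hyperplane $x_j=c$ contains at most one node). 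Since $n$ may be as large as $\prod_{j=1}^s(1+t_j)$, which already for $t_j\equiv 1$ and $s\ge 3$ exceeds $2s$, no counting argument can distribute the zeros across coordinates as you hope; the product ansatz is a dead end for arbitrary node configurations.

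The missing idea, and the one used in the referenced proofs, is Bakhvalov's squaring trick, which settles the zeros and the integral at one stroke. Pick a nonzero $v=\sum_{\bsk\,:\,0\le k_j\le t_j}c_{\bsk}\prod_{j=1}^s\cos(\pi k_jx_j)$ vanishing at all nodes; this is a homogeneous system of $n$ equations in $\prod_{j=1}^s(1+t_j)$ unknowns, so it has a nonzero solution whenever $n$ is strictly below that dimension (the paper only ever applies the proposition with $n=-1+\prod_{j=1}^s(1+t_j)$ or $n<2^s$, so the boundary case you worried about is not actually needed; your instinct that it is delicate was sound, but your fix sacrifices the integral control). Now set $g:=v^2$. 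Then $g$ still vanishes at every node; $g\ge 0$, so by Parseval $\INT_s(g)=\|v\|_{L_2}^2>0$ is automatically comparable to the coefficient mass of $v$, which is exactly the control your proposal lacks; and since products of cosines with frequencies $\le t_j$ expand into cosines with frequencies $\le 2t_j$, $g$ lies in the span you considered, whence $\|g\|_{\Kcos}^2\le \omega^{-\sum_j a_j(2t_j)^{b_j}}\|g\|_{L_2}^2$. So the doubling in the exponent comes from squaring $v$, not from enlarging the existence space (note that squaring an element of \emph{your} space would give frequencies up to $4t_j$ and a weaker exponent). Finally, $\|g\|_{L_2}=\|v^2\|_{L_2}\le\|v\|_\infty\|v\|_{L_2}$ together with the Cauchy--Schwarz bound $\|v\|_\infty\le 2^{s/2}\bigl(\prod_{j=1}^s(1+t_j)\bigr)^{1/2}\|v\|_{L_2}$ yields $e(A_{n,s},\INT_s)\ge \INT_s(g)/\|g\|_{\Kcos}\ge \omega^{\frac12\sum_j a_j(2t_j)^{b_j}}\prod_{j=1}^s\bigl(2(1+t_j)\bigr)^{-1/2}$, which implies the stated bound. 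Without this squaring step your argument does not close.
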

\begin{proof}
The lower bound can be shown in the same way as \cite[Lemma 1]{KPW14} or \cite[Theorem 2]{IKLP15}.
\end{proof}

\subsection{The main results for integration}\label{secmainint}

The following theorem gives necessary and sufficient conditions on the weight sequences $\bsa$ and $\bsb$ for EXP, UEXP, and the notions of EC-WT, EC-PT, and EC-SPT.

\begin{theorem}\label{thm:intmain}
Consider integration defined over $\cHcos$ with weight sequences $\bsa$ and $\bsb$.
\begin{enumerate}
\item\label{it:intexp}
EXP holds for all $\bsa$ and $\bsb$ considered, and $p^{*}(s)=\frac{1}{B(s)}$ with $B(s):=\sum_{j=1}^s\frac1{b_j}$.
\item\label{it:intuexp}
The following assertions are equivalent:
\begin{enumerate}[(i)]
\item The sequence $\{b_j^{-1}\}_{j \ge 1}$ is summable, i.e., $B:=\sum_{j=1}^\infty\frac1{b_j}<\infty$;
\item we have UEXP;
\item we have EC-PT;
\item we have EC-SPT.
\end{enumerate}
If one of the assertions holds then $p^*=1/B$ and the exponent $\tau^{\ast}$ of EC-SPT is $B$.
\item\label{it:wtnec} EC-WT implies that $\lim_{j \rightarrow \infty} a_j 2^{b_j} =\infty$.
\item\label{it:wtsuff} A sufficient condition for EC-WT is that $\lim_{j \rightarrow \infty} a_j=\infty$.
\end{enumerate}
\end{theorem}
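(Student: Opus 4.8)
The plan is to prove EC-WT by producing, for each pair $(\eps,s)$, an explicit integration rule whose number of nodes $n$ obeys $\log n=o(s+\log\eps^{-1})$. Since $e(0,\INT_s)=1$, any such rule with $e(T_{n,s},\INT_s)\le\eps$ gives $n(\eps,\INT_s)\le n$, and hence forces the defining limit of EC-WT to vanish. The only upper bound available is Proposition~\ref{prop:wcints}, so I would start from
\begin{align*}
e^2(T_{n,s},\INT_s)\le -1+\prod_{j=1}^s\Bigl(1+\omega^{a_j(2n_j)^{b_j}}C(a_*,b_*)\Bigr)\le \exp\Bigl(C(a_*,b_*)\sum_{j=1}^s\omega^{a_j(2n_j)^{b_j}}\Bigr)-1,
\end{align*}
using $1+x\le\ee^x$. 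Thus it suffices to choose $n_1,\dots,n_s\in\NN$ with $\sum_{j=1}^s\omega^{a_j(2n_j)^{b_j}}\le\delta$, where $\delta:=\log(1+\eps^2)/C(a_*,b_*)$, which is comparable to $\eps^2$; for then $e(T_{n,s},\INT_s)\le\eps=\eps\,e(0,\INT_s)$.

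The construction of the $n_j$ is where $\lim_j a_j=\infty$ enters. I would fix a threshold $A=A(\eps,s)$ and, using that $\{a_j\}$ is nondecreasing, split $[s]$ into an initial segment $H=\{j\le s:a_j<A\}$ and its complement $E$. On $E$ I set $n_j=1$; on $H$ I take $n_j$ minimal with $\omega^{a_j(2n_j)^{b_j}}\le\delta/(2|H|)$, so that the $H$-terms contribute at most $\delta/2$. Since $a_j\ge a_*\ge1$ and $b_j\ge b_*$, solving this inequality yields a bound $n_j\le 1+\tfrac12\bigl(\log(2|H|/\delta)/\log\omega^{-1}\bigr)^{1/b_*}$ uniformly over $j\in H$, whence
\begin{align*}
\log n=\sum_{j\in H}\log n_j\le |H|\cdot\frac{1}{b_*}\log\Bigl(1+C'\,(\log(|H|/\delta))^{1/b_*}\Bigr)
\end{align*}
for some constant $C'$. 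On the easy block one estimates $\sum_{j\in E}\omega^{a_j 2^{b_j}}\le (s-|H|)\,\omega^{A\,2^{b_*}}$, since $a_j\ge A$ and $2^{b_j}\ge 2^{b_*}$ there.

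The main obstacle is to balance these two demands. Pushing the easy tail below $\delta/2$ forces $A$ of order $\log s+\log\eps^{-1}$, and one must then verify that the hard block remains short enough that $|H|\cdot\log A=o(s+\log\eps^{-1})$ in the joint limit $s+\log\eps^{-1}\to\infty$. This is exactly the point at which $a_j\to\infty$ must do the work: it guarantees that $|H|=|\{j:a_j<A\}|$ is finite for every fixed $A$, and the quantitative input one has to push through is that the threshold needed to tame the tail never inflates $H$ beyond $o(s+\log\eps^{-1})$ coordinates. I regard this counting estimate as the delicate heart of the argument—the growth rate of $a_j$ relative to $A$ is precisely what is at stake—while the remaining geometric tail sums and the logarithmic bookkeeping for $\log n$ are routine. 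Should the direct product-rule estimate prove too crude for very slowly increasing $\{a_j\}$, the natural fallback is to invoke the bound \eqref{eq:intapp2}, $n(\eps,\INT_s)\le n(\eps,\APP_s;\lstd)$, and transfer the EC-WT statement from the approximation problem, where the eigenvalues $\{\omega_{\bsk}\}$ render the relevant counting transparent.
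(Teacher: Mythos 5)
Your proposal addresses only Point \ref{it:wtsuff} of the theorem. Points \ref{it:intexp}--\ref{it:wtnec} --- EXP with the exact rate $p^*(s)=1/B(s)$, the equivalence of $B<\infty$, UEXP, EC-PT and EC-SPT with $\tau^*=B$, and the necessity of $\lim_j a_j2^{b_j}=\infty$ for EC-WT --- are untouched; they form the bulk of the paper's proof and require both the constructive upper bounds and the lower bound of Proposition~\ref{prop:lower}, which your plan never invokes. So even if your argument for Point \ref{it:wtsuff} were complete, it would prove only a fraction of the statement.

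More importantly, the step you yourself flag as ``the delicate heart'' of your argument for Point \ref{it:wtsuff} is not merely delicate: it is false, and no choice of mesh sizes can repair it, because equal-weight product midpoint rules are provably too weak for this claim. The first display in the proof of Proposition~\ref{prop:wcints} is an equality, so by Lemma~\ref{le1} every such rule satisfies
\begin{align*}
e^2(T_{n,s},\INT_s)=-1+\prod_{j=1}^s\Bigl(1+2\sum_{t=1}^\infty\omega^{a_j(2tn_j)^{b_j}}\Bigr)\ \ge\ 2\sum_{j=1}^s\omega^{a_j(2n_j)^{b_j}}.
\end{align*}
Now take $b_j\equiv1$ and $a_j=\max(1,\log\log j)$, which is nondecreasing and tends to infinity. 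Fix $\eps$ and let $s\to\infty$. If $e(T_{n,s},\INT_s)\le\eps$, then $\sum_{j=1}^s\omega^{2a_jn_j}\le\eps^2/2$, so at least $s/2$ indices satisfy $\omega^{2a_jn_j}\le\eps^2/s$, i.e.\ $n_j\ge\log(s\eps^{-2})/(2a_j\log\omega^{-1})\ge c\,(\log s)/\log\log s$ since $a_j\le\log\log s$. Hence $\log n=\sum_{j=1}^s\log n_j\ge\tfrac{s}{2}(1+o(1))\log\log s$, which is emphatically not $o(s+\log\eps^{-1})$. Note also that your split of $[s]$ into $H$ and $E$ collapses here: the threshold $A$ needed to tame the easy block is of order $\log s$, while $a_s\sim\log\log s$, so $H=[s]$ and every coordinate is ``hard''. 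Consequently your fallback is not a fallback but the only viable route: one must use \eqref{eq:intapp2} and EC-WT for $L_2$-approximation with $\lstd$ (Point \ref{it:appwt} of Theorem~\ref{thm:appmain}) --- which is exactly the paper's one-line proof of Point \ref{it:wtsuff}. But then the real work is proving that approximation statement, which your proposal also does not do; the algorithm there samples on a grid yet uses non-equal weights (the truncated-projection algorithm $A_{n,s,M}$ of \eqref{eqdefAnsM}), and its analysis is genuinely different from the Proposition~\ref{prop:wcints} bound you rely on.
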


%\subsection{Proof of Theorem \ref{thm:intmain}}
\begin{proof}
{\bf Proof of Point \ref{it:intexp}:} We first show that we always have EXP. For $\eps\in(0,1)$, define
\begin{align*}
m=\max_{j=1,2,\dots,s}\,\left\lceil\left(\frac{1}{a_j}\,\frac{\log\left(C(a_*,b_*)\frac{s}{\log(1+\eps^2)}\right)}{\log\, \omega^{-1}}\right)^{B(s)}\,\right\rceil
\end{align*}
with $C(a_*,b_*)=2\omega^{-a_*}\sum_{k=1}^\infty \omega^{a_* k^{b_*}}$. Now let $n_1,n_2,\ldots,n_s$ be given by
\begin{align*}
n_j:=\left\lfloor m^{1/(B(s)  b_j)}\right\rfloor\qquad\textnormal{for}\quad  j=1,2,\ldots,s
\end{align*}
and set $n=n_1n_2\cdots n_s$. Since $\lfloor x\rfloor\ge x/2$ for all $x\ge1$, we have
\begin{align*}
a_j(2 n_j)^{b_j}\ge a_j \,m^{1/B(s)}
\end{align*}
for every $j=1,2,\ldots,s$. Then we obtain with Proposition \ref{prop:wcints},
\begin{align*}
e^2(T_{n,s},\INT_s) \le -1+\prod_{j=1}^s\left(1+\omega^{a_j m^{1/B(s)}} C(a_*,b_*)\right).
\end{align*}
From the definition of $m$ we have for all $j=1,2,\dots,s$,
\begin{align*}
\omega^{a_j m^{1/B(s)}}C(a_*,b_*) \le \frac{\log(1+\eps^2)}{s}.
\end{align*}
This proves
\begin{align}\label{eq:eexp}
e(T_{n,s},\INT_s)\le \left(-1+\left(1+\frac{\log(1+\eps^2)}{s}\right)^s\,\right)^{1/2} \le \left(-1+\exp(\log(1+\eps^2))\right)^{1/2}=\eps.
\end{align}
Furthermore, note that
\begin{align}\label{eq:nexp}
n=\prod_{j=1}^s n_j=\prod_{j=1}^s \left\lfloor m^{1/(B(s) b_j)} \right\rfloor \le m^{\frac{1}{B(s)}\sum_{j=1}^s 1/b_j}= m =\cO\left(\log^{\,B(s)}\left(1+\eps^{-1}\right)\right),
\end{align}
with the factor in the $\cO$ notation independent of $\eps^{-1}$ but dependent on $s$. From \eqref{eq:eexp} and \eqref{eq:nexp} it follows directly that we always have EXP with $p^*(s)\geq 1/B(s)$.

It remains to show that we have indeed $p^*(s)=1/B(s)$ for the largest possible rate of EXP. To this end, assume that EXP holds. Let $\bst=(t_1,\ldots,t_s)\in\NN^s$ and choose $n=-1+\prod_{j=1}^s (1+t_j)$. Then Proposition~\ref{prop:lower} implies
\begin{align*}
C(s) q^{(n/M(s))^{p(s)}}\ge  \omega^{\sum_{j=1}^s a_j (2t_j)^{b_j}}\prod_{j=1}^s (5(1+ t_j))^{-1},
\end{align*}
which implies
%\begin{align*}
%C(s)&\geq\frac{\omega^{\sum_{j=1}^s a_j (2t_j)^{b_j}}\prod_{j=1}^s (5(1+ t_j))^{-1}}{q^{(n/M(s))^{p(s)}}}\\
%&=\exp\left(\log \left( \omega^{\sum_{j=1}^s a_j (2t_j)^{b_j}}\right) +\log\left(\prod_{j=1}^s (5(1+ t_j))^{-1}\right) -\log \left(q^{(n/M(s))^{p(s)}}\right) \right)\\
%&=\exp\left(-\sum_{j=1}^s a_j (2t_j)^{b_j} \log \omega^{-1}-\sum_{j=1}^s \log (5(1+t_j)) + \left(\frac{n}{M(s)}\right)^{p(s)}\log q^{-1}\right).
%\end{align*}
%This implies
%\begin{align*}
%\log C(s) +  \sum_{j=1}^s a_j (2t_j)^{b_j} \log \omega^{-1}+\sum_{j=1}^s \log (5(1+t_j))\ge n^{p(s)} (\log q^{-1}) M(s)^{-{p(s)}},
%\end{align*}
%which, in turn, is equivalent to
%\begin{align*}
%\sum_{j=1}^s a_j (2t_j)^{b_j} +\frac{\log C(s)+\sum_{j=1}^s \log (5(1+t_j))}{\log\omega^{-1}} \geq\left(\prod_{j=1}^s (1+t_j)-1\right)^{p(s)} \frac{\log q^{-1}}{\log \omega^{-1}}M(s)^{-p(s)}.
%\end{align*}
%This implies
\begin{align}
&\frac{\sum_{j=1}^s a_j (2t_j)^{b_j}}{\prod_{j=1}^s (1+t_j)^{p(s)}}+\frac{\log C(s)+\sum_{j=1}^s \log (5(1+t_j))}{\prod_{j=1}^s (1+t_j)^{p(s)}\log\omega^{-1}} \geq\nonumber\\
&\qquad\qquad\qquad\qquad\qquad\ge \left(1-\frac{1}{\prod_{j=1}^s (1+t_j)}\right)^{p(s)} \frac{\log q^{-1}}{\log \omega^{-1}} M(s)^{-p(s)}.\label{eq:B1}
\end{align}
For fixed $s$, when $\norm{\bst}_{s,\infty}=\max_{1\le j\le s} t_j\to\infty$, then the second term of the left hand side of \eqref{eq:B1} goes to zero, and it follows that
\begin{align*}
\liminf_{\norm{\bst}_{s,\infty}\to\infty} \frac{\sum_{j=1}^s a_j (2t_j)^{b_j}}{\prod_{j=1}^s (1+t_j)^{p(s)}}\ge \frac{\log q^{-1}}{\log\omega^{-1}}M(s)^{-p(s)}>0.
\end{align*}
For a positive number $t$ take now
\begin{align*}
t_j:=\left\lceil t^{1/b_j}\right\rceil\ \mbox{for all } j=1,2,\ldots,s.
\end{align*}
Clearly, $\lim_{t\to\infty} \left\lceil t^{1/b_j}\right\rceil /t^{1/b_j}=1$. Then we have
\begin{align}
\frac{\sum_{j=1}^s a_j (2t_j)^{b_j}}{\prod_{j=1}^s (1+t_j)^{p(s)}} %&=\frac{\sum_{j=1}^s a_j (2\left\lceil t^{1/b_j}\right\rceil)^{b_j}}{\prod_{j=1}^s (1+\left\lceil t^{1/b_j}\right\rceil)^{p(s)}}\nonumber\\
%&=\frac{\sum_{j=1}^s a_j (2\left\lceil t^{1/b_j}\right\rceil)^{b_j}t/((t^{1/b_j})^{b_j})}{\prod_{j=1}^s (1+\left\lceil t^{1/b_j}\right\rceil)^{p(s)} \prod_{j=1}^s (t^{1/b_j})^{p(s)} /\prod_{j=1}^s (t^{1/b_j})^{p(s)}}\nonumber\\
=t^{1-p(s)\sum_{j=1}^s b_j^{-1}}\frac{\sum_{j=1}^s a_j (2\left\lceil t^{1/b_j}\right\rceil/t^{1/b_j})^{b_j}}{\prod_{j=1}^s (\left\lceil t^{1/b_j}\right\rceil /t^{1/b_j}+t^{-1/b_j})^{p(s)}}.\label{eq:B2}
\end{align}
Now if $t\to\infty$, then
\begin{align*}
\frac{\sum_{j=1}^s a_j (2\left\lceil t^{1/b_j}\right\rceil/t^{1/b_j})^{b_j}}{\prod_{j=1}^s (\left\lceil t^{1/b_j}\right\rceil /t^{1/b_j}+t^{-1/b_j})^{p(s)}}\longrightarrow \sum_{j=1}^s a_j 2^{b_j}.
\end{align*}
However, the expression \eqref{eq:B2} must be positive when $t$ tends to infinity, so we must have $p(s)\sum_{j=1}^s b_j^{-1}\le 1$ or, equivalently, $p(s)\leq 1/B(s)$. Thus, altogether $p(s)=1/B(s)$.

\vspace{0.25cm}
\noindent {\bf Proof of Point \ref{it:intuexp}:} Obviously, EC-SPT implies EC-PT. In \cite{KPW14} it is shown that EC-PT implies UEXP. Now we assume that UEXP holds, then we can show in exactly the same way as above that
\begin{align*}
\liminf_{\norm{\bst}_{s,\infty}\to\infty} \frac{\sum_{j=1}^s a_j (2t_j)^{b_j}}{\prod_{j=1}^s (1+t_j)^{p}}\ge \frac{\log q^{-1}}{\log\omega^{-1}}M(s)^{-p}>0.
\end{align*}
From this, it is then shown analogously to the proof of the first point that $p\sum_{j=1}^s b_j^{-1}\le 1$. Since this holds for all $s$, we conclude, for $s$ tending to infinity, that $p\sum_{j=1}^\infty 1/b_j=pB \leq 1$. Furthermore, if we assume that $B<\infty$, and if we replace $B(s)$ by $B$ in the proof of Point \ref{it:intexp}, we see that UEXP holds with $p\geq 1/B$. Thus we have that $p^*=1/B$.

It remains to show that $B<\infty$ implies EC-SPT. So assume that $B=\sum_{j=1}^\infty 1/b_j<\infty$ and let $n_1,\ldots,n_s$ be given by
\begin{align*}
n_j=\left\lceil\left(\frac{\log\left(C(a_*,b_*)\,\frac{\pi^2}{6}\,\frac{j^2}{\log(1+\eps^2)}\right)}{a_j 2^{b_j}\,\log \omega^{-1}}\right)^{1/b_j}\right\rceil
\end{align*}
with $C(a_*,b_*)=2\omega^{-a_*}\sum_{k=1}^\infty \omega^{a_* k^{b_*}}$. Note that $n_j$ is defined such that
\begin{align*}
\omega^{a_j(2 n_j)^{b_j}}C(a_*,b_*)\leq\frac{6}{\pi^2}\ \frac{\log(1+\eps^2)}{j^2}.
\end{align*}
Therefore
\begin{align*}
e^2(T_{n,s},\INT_s)\leq& -1+\prod_{j=1}^s\left(1+\frac{6}{\pi^2}\,\frac{\log(1+\eps^2)}{j^2}\right)\\
%=&-1+\exp\left(\sum_{j=1}^s\log\left(1+\frac{6}{\pi^2}\,\frac{\log(1+\eps^2)}{j^2}\right)\right)\\
\leq&-1 +\exp\left(\frac{6}{\pi^2}\,\log(1+\eps^2)\ \sum_{j=1}^sj^{-2}\right)\\
\leq&-1+\exp\left(\log(1+\eps^2)\right)=\eps^2.
\end{align*}

We now estimate $n_j$ and then $n=\prod_{j=1}^s n_j$. Clearly, $n_j\ge1$ for all $j\in\NN$. We prove that $n_j=1$ for large $j$. Indeed, $n_j=1$ if
\begin{align}\label{lasteq}
a_j 2^{b_j}\log \omega^{-1}\ge \log\left(C(a_*,b_*)\frac{\pi^2}6\,\frac{j^2}{\log(1+\eps^2)}\right).
\end{align}

Let $\delta>0$. From $\sum_j \frac{1}{b_j}< \infty$ it follows with the Cauchy condensation test that also $\sum_j \frac{2^j}{b_{2^j}}< \infty$ and hence $\lim_j 2^j/b_{2^j} =0$. Hence, we find that $b_{2^j}^{-1} \le \frac{\delta}{2^{j+1}}$ for $j$ large enough.
For large enough $k$ with $2^j \leq k \leq 2^{j+1}$ we then obtain
\begin{align*}
\frac{1}{b_k} \leq \frac{1}{b_{2^{j}}} \leq \frac{\delta}{2^{j+1}} \le \frac{\delta}{k}
\end{align*}
or, equivalently, $2^{b_k} \ge 2^{k/\delta}$. Hence, there exists a positive $\beta_1$ such that
\begin{align*}
a_k 2^{b_k} \geq \beta_1 2^{k/\delta}\qquad\textnormal{for all}\quad k \in \NN.
\end{align*}
Then the inequality \eqref{lasteq} holds for all $j\ge j^*$, where $j^*$ is the smallest positive integer for which
\begin{align*}
j^*\ge \frac{\delta}{\log\,2}\, \log\left(\frac1{\beta_1\log \omega^{-1}}\ \log\left(C(a_*,b_*)\frac{\pi^2}6\, \frac{[j^*]^2}{\log(1+\eps^2)}\right)\right).
\end{align*}
Clearly,
\begin{align*}
j^*= \frac{\delta}{\log\,2}\log\ \log\ \eps^{-1}+\cO(1) \qquad\textnormal{as}\qquad\eps\to0.
\end{align*}
Without loss of generality we can restrict ourselves to $\eps\le {\rm e}^{-{\rm e}}$, where ${\rm e}=\exp(1)$, so that $\log\,\log\,\eps^{-1}\ge1$. Then there exists a number $C_0\ge1$, independent of $\eps$ and $s$,  such that
\begin{align*}
n_j=1\quad\textnormal{for all}\quad j> \left\lfloor C_0+\frac{\delta}{\log\,2}\,\log\ \log\ \eps^{-1}\right\rfloor.
\end{align*}
We now estimate $n_j$ for $j\le \left\lfloor C_0+\frac{\delta}{\log\,2}\, \log\,\log\,\eps^{-1}\right\rfloor$. Note that
\begin{align*}
\log\left(\frac{2}{1-\omega}\frac{\pi^2}6\ \frac{j^2}{\log(1+\eps^2)}\right)=\log\left(C(a_*,b_*)\frac{\pi^2}{6\,\log(1+\eps^2)}\right)\ +\ \log(j^2).
\end{align*}
Then $a_j 2^{b_j}\ge 2^{j/\delta}$ also implies that
\begin{align*}
K(\bsa,\bsb):= \sup_{j\in\NN}\frac{\log(j^2)}{a_j 2^{b_j}}<\infty.
\end{align*}
Furthermore, there exists a number $C_1\geq1$, independent of $\eps$ and $s$ such that
\begin{align*}
\log\left(C(a_*,b_*)\frac{\pi^2}{6\,\log(1+\eps^2)}\right)\le C_1+2\log\frac1\eps\qquad\textnormal{for all}\quad\eps\in(0,1).
\end{align*}
This yields
\begin{align*}
n_j\leq1+\left(\frac{K(\bsa,\bsb)+C_1+2\log \eps^{-1}}{\log\omega^{-1}}\right)^{1/b_j}\qquad\textnormal{for all}\quad j\leq \left\lfloor C_0+\frac{\delta}{\log\,2}\,\log\,\log \frac{1}{\eps}\right\rfloor.
\end{align*}
Let
\begin{align*}
k=\min\left(s,\left\lfloor C_0+\frac{\delta}{\log\,2} \,\log\,\log\frac1\eps\right\rfloor\right).
\end{align*}
Then for $C=\max\left(K(\bsa,\bsb)+C_1,-2{\rm e}+\log \omega^{-1}\right)$ we have
\begin{align*}
\max\left(1,\frac{K(\bsa,\bsb)+C_1+2\log\eps^{-1}}{\log \omega^{-1}}\right) \le \frac{C+2\log\eps^{-1}}{\log \omega^{-1}}
\end{align*}
and
\begin{align*}
n=&\prod_{j=1}^sn_j=\prod_{j=1}^kn_j\le\prod_{j=1}^k \left(1+\left(\frac{C+2\log\eps^{-1}} {\log \omega^{-1}}\right)^{1/b_j}\right)\\
=&\left(\frac{C+2\log\eps^{-1}} {\log \omega^{-1}} \right)^{\sum_{j=1}^k1/b_j}\,\prod_{j=1}^k\left(1+\left(\frac{\log \omega^{-1}}{C+2\log\eps^{-1}}\right)^{1/b_j}\right)\\
\le&\left(\frac{C+2\log\eps^{-1}} {\log \omega^{-1}}\right)^{B}\,2^{k}.
\end{align*}
Note that
\begin{align*}
2^k\le 2^{C_0}\,\exp\left(\delta \log\,\log\frac{1}{\eps}\right)=2^{C_0}\,\log^{\delta}\frac1\eps.
\end{align*}
Therefore for any positive $\delta$ there is a positive number $C_\delta$ independent of $\eps^{-1}$ and $s$ such that
\begin{align*}
n(\eps,s)\leq n\leq C_\delta\,\log^{B+\delta}\left(1+\frac1\eps\right)\qquad \textnormal{for all}\qquad\eps\in(0,1),\ s\in\NN.
\end{align*}
This means that we have EC-SPT with $\tau^\ast$ at most $B$.

\vspace{0.25cm}
\noindent {\bf Proof of Point \ref{it:wtnec}:} Assume that EC-WT holds and that $(a_j 2^{b_j})_{j \ge 0}$ is bounded, say $a_j 2^{b_j}\le A<\infty$ for all $j \in \NN$. From setting $t_1=t_2=\cdots=1$ in Proposition~\ref{prop:lower} it follows that for all $n<2^s$ we have
\begin{align*}
e(n,\INT_s)\geq 10^{-s} \,\omega^{\sum_{j=1}^s a_j2^{b_j}}\geq 10^{-s}\,\omega^{A s}= \eta^s,
\end{align*}
where $\eta:= \omega^{A}/10 \in (0,1)$. Hence, for $\eps=\eta^s/2$ we have $e(n,\INT_s)>\eps$ for all $n<2^s$. This implies that $n(\eps,\INT_s)\geq2^s$ and
\begin{align*}
\frac{\log n(\eps,\INT_s)}{s+\log \eps^{-1}}\geq\frac{s \log 2}{s+\log 2 +s \log \eta^{-1}} \longrightarrow  \frac{\log 2}{1+\log \eta^{-1}} >0\quad \textnormal{as } s \rightarrow \infty,
\end{align*}
but this contradicts EC-WT. Therefore, it must hold that $\lim_{j\to\infty}a_j 2^{b_j}=\infty$.

\vspace{0.25cm}
\noindent {\bf Proof of Point \ref{it:wtsuff}:} By Point \ref{it:appwt} of Theorem~\ref{thm:appmain}, the condition $\lim_{j\rightarrow \infty} a_j =\infty$ implies EC-WT for the approximation problem with $\Lambda^{{\rm std}}$, which, by \eqref{eq:intapp2}, also implies EC-WT for the integration problem.
\end{proof}

\section{$L_2$-approximation in half-period cosine spaces}

Let $\APP_s: \cHcos\to L_2 ([0,1]^s)$ with $\APP_s(f)=f$. In order to approximate $\APP_s$ with respect to the norm in $L_2 ([0,1]^s)$ we use linear algorithms $A_{n,s}$, which are algorithms based on $n$ information evaluations of the form
\begin{align*}
A_{n,s}(f)=\sum_{k=1}^n \alpha_k L_k (f)\quad \mbox{for}\ f\in\cHcos,
\end{align*}
where each $\alpha_k$ is a function from $L_2 ([0,1]^s)$ and each $L_k$ is a continuous linear functional defined on $\cHcos$ from a permissible class $\Lambda$ of information.

The worst-case error of an algorithm $A_{n,s}$ is defined as
\begin{align*}
e(A_{n,s},\APP_s):=\sup_{\substack{f\in \cHcos\\ \norm{f}_{\Kcos}\le 1}} \norm{f- A_{n,s} (f)}_{L_2 ([0,1]^s)}.
\end{align*}
The $n$th minimal worst-case error for the information class $\Lambda\in\{\Lambda^{\mathrm{std}},\Lambda^{\mathrm{all}}\}$ is denoted by
\begin{align*}
e(n,\APP_s;\Lambda)=\inf_{A_{n,s}} e(A_{n,s},\APP_s),
\end{align*}
where the infimum is taken over all linear algorithms $A_{n,s}$ using information from $\Lambda$. For $n=0$, we simply approximate $f$ by zero, and the initial error is $e(0,\APP_s;\Lambda)=1$.

\subsection{Some auxiliary results}\label{applambdastd}

We proceed in a similar way to \cite{DKPW13} and \cite{IKPW15a}. For $M>1$ we define the set $\cA(s,M)=\{\bsh\in\NN_0^s: \omega_{\bsh}^{-1}<M\}$ and we will study approximating $f\in\cHcos$ by algorithms of the form
\begin{align}\label{eqdefAnsM}
A_{n,s,M}(f)(\bsx)= \sum_{\bsh \in \cA(s,M)} \left(\frac{1}{n}\sum_{k=1}^{n} f(\bsx_k) 2^{\frac{|\bsh|_*}{2}} \prod_{j=1}^s \cos(\pi h_j x_{k,j}) \right) 2^{\frac{|\bsh|_*}{2}} \prod_{j=1}^s \cos(\pi h_j x_j),
\end{align}
where $\bsx=(x_1,\ldots,x_s)\in[0,1]^s$. The choice of $M$ and $\bsx_k=(x_{k,1},\ldots,x_{k,s}) \in [0,1)^s$ will be given later.

We first show upper bounds on the worst-case error of $A_{n,s,M}$. The following analysis is similar to that in \cite{IKPW15a, KSW06}. Using Parseval's identity we obtain
\begin{align}
&\|f-A_{n,s,M}(f)\|_{L_2([0,1]^s)}^2=\nonumber\\
&\qquad\quad=\sum_{\bsh \notin\cA(s,M)} |\widetilde{f}(\bsh)|^2+\sum_{\bsh \in \cA(s,M)}\left|\widetilde{f}(\bsh)-\frac{1}{n}\sum_{k=1}^n f(\bsx_k) 2^{\frac{|\bsh|_*}{2}} \prod_{j=1}^s \cos(\pi h_j x_{k,j})\right|^2.\label{approx_err}
%&\qquad=\sum_{\bsh \notin\cA(s,M)} |\widetilde{f}(\bsh)|^2 \label{approx_err}\\
%&\qquad\quad + \sum_{\bsh \in\cA(s,M)}\left|\int_{[0,1]^s}f(\bsx) 2^{|\bsh|_0/2} \prod_{j=1}^s \cos(\pi h_j x_j) \rd \bsx-\frac{1}{n}\sum_{k=1}^{n} f(\bsx_k)2^{|\bsh|_0/2} \prod_{j=1}^s \cos(\pi h_j x_{k,j})\right|^2.
\end{align}
The first term can be estimated by
\begin{align}\label{approx_err1}
\sum_{\bsh \notin\cA(s,M)}|\widetilde{f}(\bsh)|^2 = \sum_{\bsh \notin\cA(s,M)}|\widetilde{f}(\bsh)|^2 \omega_{\bsh}\,\omega_{\bsh}^{-1}\leq\frac{1}{M} \|f\|_{\Kcos}^2.
\end{align}

We now consider the second term in \eqref{approx_err}. Let $\bsh\in\NN_0^s$. For $f\in\cHcos$ we define $f_{\bsh}(\bsx):=f(\bsx) 2^{|\bsh|_*/2}\prod_{j=1}^s \cos(\pi h_j x_j)$. So we get
\begin{align*}
\left|\widetilde{f}(\bsh)-\frac{1}{n}\sum_{k=1}^n f(\bsx_k) 2^{|\bsh|_*/2} \prod_{j=1}^s \cos(\pi h_j x_{k,j})\right|^2=\left|\int_{[0,1]^s}f_{\bsh}(\bsx) \rd \bsx-\frac{1}{n}\sum_{k=1}^{n} f_{\bsh}(\bsx_k)\right|^2.
\end{align*}

As for the integration problem, we choose the points $\bsx_1,\ldots,\bsx_n$ used in the algorithm $A_{n,s,M}$ according to a centered regular grid $\cG_{n,s}$ with different mesh-sizes $n_1,\ldots,n_s \in \NN$ for successive variables, i.e.,
\begin{align*}
\cG_{n,s}=\left\{\left(\frac{2 k_1+1}{2 n_1},\ldots, \frac{2 k_s+1}{2 n_s}\right)\,:\ k_j=0,1,\ldots,n_j -1 \textnormal{ for all } j=1,2,\ldots,s\right\},
\end{align*}
where $n=\prod_{j=1}^s n_j$ is the cardinality of $\cG_{n,s}$. By $\cG_{n,s}^{\bot}$ we denote the set
\begin{align*}
\cG_{n,s}^{\bot}=\{\bsh \in \NN_0^s \, : \, h_j \equiv 0 \pmod{2 n_j} \ \mbox{ for all } \ j=1,2,\ldots,s\}.
\end{align*}
Since $f_{\bsh}$ can be represented by a pointwise convergent cosine series, we have that
\begin{align*}
\left|\int_{[0,1]^s}f_{\bsh}(\bsx) \rd \bsx-\frac{1}{n}\sum_{k=1}^{n} f_{\bsh}(\bsx_k)\right|&\leq \left|\sum_{\bsl \in \NN_0^s \setminus \{\bszero\}} \widetilde{f_{\bsh}}(\bsl) 2^{\frac{|\bsl|_*}{2}}  \prod_{j=1}^s \left( \frac{1}{n_j} \sum_{k_j=1}^{n_j}\cos\left(\pi l_j \frac{2 k_j+1}{2 n_j}\right)\right) \right| \\
&\leq \sum_{\bsl \in \cG_{n,s}^{\bot} \setminus \{\bszero\}}| \widetilde{f_{\bsh}}(\bsl)| \ 2^{\frac{|\bsl|_*}{2}}.
\end{align*}
Using the fact that
\begin{align*}
\cos(\pi h t) \cos(\pi l t)= \frac{\cos(\pi(h+l)t)+\cos(\pi(h-l)t)}{2}
\end{align*}
we obtain
\begin{align*}
\widetilde{f_{\bsh}}(\bsl)&=\frac{1}{2^s} \int_{[0,1]^s} f(\bst)2^{\frac{|\bsh|_*}{2}}2^{\frac{|\bsl|_*}{2}} \prod_{j=1}^s ( \cos(\pi(h_j+l_j)t_j)+\cos(\pi(h_j-l_j)t_j) ) \rd \bst\\
&=\frac{1}{2^s}\sum_{\uu \subseteq [s]} \int_{[0,1]^s} f(\bst) 2^{\frac{|\bsh|_*}{2}}2^{\frac{|\bsl|_*}{2}}\prod_{j \in \uu} \cos(\pi(h_j+l_j)t_j) \prod_{j \not\in \uu} \cos(\pi(h_j-l_j)t_j) \rd \bst \\
&=\frac{1}{2^s} \sum_{\uu \subseteq [s]} \widetilde{f}(\bsh (\pm)_{\uu} \bsl),
\end{align*}
where $\bsh (\pm)_{\uu} \bsl =(y_1,y_2,\ldots,y_s)$ with
\begin{align*}
y_j=\begin{cases}h_j+l_j & \textnormal{if } j \in \uu,\\|h_j-l_j| & \textnormal{if } j \not\in \uu.\end{cases}
\end{align*}
Thus we obtain
\begin{align*}
\left|\int_{[0,1]^s}f_{\bsh}(\bsx)\rd\bsx-\frac{1}{n}\sum_{k=1}^{n} f_{\bsh}(\bsx_k)\right|^2 &\leq\left(\sum_{\bsl\in\cG_{n,s}^{\bot}\setminus\{\bszero\}}2^{\frac{|\bsl|_*}{2}}\left|\frac{1}{2^s}\sum_{\uu\subseteq [s]}\widetilde{f}(\bsh (\pm)_{\uu}\bsl)\right|\right)^2\\
\le & \left(\frac{1}{2^s}\sum_{\uu\subseteq [s]}\sum_{\bsl\in\cG_{n,s}^{\bot}\setminus\{\bszero\}}2^{\frac{|\bsl|_*}{2}}\left|\widetilde{f}(\bsh(\pm)_{\uu} \bsl)\right|\omega_{\bsh(\pm)_{\uu}\bsl}^{-1/2}\omega_{\bsh(\pm)_{\uu}\bsl}^{1/2}\right)^2.
\end{align*}
Using the Cauchy-Schwarz inequality we obtain
\begin{align}
&\left|\int_{[0,1]^s} f_{\bsh}(\bsx)\rd\bsx-\frac{1}{n}\sum_{k=1}^{n} f_{\bsh}(\bsx_k) \right|^2\nonumber \\
& \qquad\qquad\leq\left(\frac{1}{2^s} \sum_{\uu \subseteq [s]} \sum_{\bsl \in \cG_{n,s}^{\bot} \setminus \{\bszero\}}  \left|\widetilde{f}(\bsh (\pm)_{\uu} \bsl)\right|^2 \omega_{\bsh (\pm)_{\uu} \bsl}^{-1}\right) \left(\frac{1}{2^s} \sum_{\uu \subseteq [s]} \sum_{\bsl \in \cG_{n,s}^{\bot} \setminus \{\bszero\}} 2^{|\bsl|_*} \omega_{\bsh (\pm)_{\uu} \bsl}\right).\label{errfhab2}
\end{align}
We know that the coefficient $\widetilde{f}(\bsh (\pm)_{\uu} \bsl)$ can occur at most $2^{s-\vert\uu\vert}$ times in \eqref{errfhab2} and therefore we get
\begin{align*}
\frac{1}{2^s} \sum_{\uu \subseteq [s]} \sum_{\bsl \in \cG_{n,s}^{\bot} \setminus \{\bszero\}}\left|\widetilde{f}(\bsh (\pm)_{\uu} \bsl)\right|^2 \omega_{\bsh (\pm)_{\uu} \bsl}^{-1}&\le \frac{1}{2^s}\sum_{\uu \subseteq [s]} \sum_{\bsk \in \NN_0^s} 2^{s-\vert \uu\vert} \left|\widetilde{f}(\bsk)\right|^2 \omega_{\bsk}^{-1}\\
&= \frac{1}{2^s} \sum_{\uu \subseteq [s]} 2^{s-\vert \uu\vert} \|f\|^2_{\Kcos}\\
%&= \|f\|^2_{\cHcos}\sum_{\uu \subseteq [s]} \frac{1}{2^{\vert \uu\vert} }\\
&= \|f\|^2_{\Kcos}\left(\frac{3}{2}\right)^s.
\end{align*}
Altogether we achieve
\begin{align}\label{errfhab}
\left|\int_{[0,1]^s} f_{\bsh}(\bsx)\rd\bsx-\frac{1}{n}\sum_{k=1}^{n} f_{\bsh}(\bsx_k) \right|^2 \le \|f\|^2_{\Kcos} \left(\frac{3}{2}\right)^s\left(\frac{1}{2^s}
\sum_{\uu \subseteq [s]} \sum_{\bsl \in \cG_{n,s}^{\bot} \setminus \{\bszero\}} 2^{|\bsl|_*} \omega_{\bsh (\pm)_{\uu} \bsl}\right).
\end{align}
For any $h,l\in\NN_0$ we have
\begin{align*}
\abs{l}^b=|l \pm h \mp h|^b \le 2^{b-1}\left(\abs{l\pm h}^b + \abs{h}^b\right)
\end{align*}
if $b\geq 1$ and
\begin{align*}
\abs{l}^b=|l \pm h \mp h|^b \le\left(\abs{l\pm h}^b + \abs{h}^b\right)
\end{align*}
if $b<1$. Thus we obtain
\begin{align*}
\abs{l\pm h}^b\geq \left(\abs{l/2}^b-\abs{h}^b \right)
\end{align*}
for any $h,l\in\NN_0$ and for any $b>0$. For $\bsh\in\cA(s,M)$ and $\uu \subseteq [s]$ this implies
\begin{align}\label{bds_omegakl}
\omega_{\bsl (\pm)_{\uu} \bsh}=\omega^{\sum_{j\in u} a_j|l_j+h_j|^{b_j} + \sum_{j\not \in u} a_j|l_j-h_j|^{b_j} }\le\omega^{\sum_{j=1}^s a_j\abs{l_j/2}^{b_j}}  \omega^{-\sum_{j=1}^s a_j\abs{h_j}^{b_j}}\le \omega^{\sum_{j=1}^s  a_j\abs{l_j/2}^{b_j}} M.
\end{align}
Hence
\begin{align}\label{bdappint1}
\left|\int_{[0,1]^s} f_{\bsh}(\bsx)\rd\bsx-\frac{1}{n}\sum_{k=1}^{n} f_{\bsh}(\bsx_k) \right|^2\le\|f\|^2_{\Kcos}\;\left(\frac{3}{2}\right)^s M\ \sum_{\bsl\in\cG_{n,s}^{\bot}\setminus\{\bszero\}} 2^{|\bsl|_*}\omega^{\sum_{j=1}^s a_j\abs{l_j/2}^{b_j}}.
\end{align}

Therefore, and using \eqref{approx_err}, \eqref{approx_err1}, and \eqref{bdappint1} for any $f\in H(K_{s,\bsa,\bsb,\omega}^{\cos})$ with $\|f\|_{\cHcos} \leq 1$, we obtain
\begin{align}\label{gen_approx_bd}
\norm{f-A_{n,s,M}(f)}_{L_2([0,1]^s)}^2 & \le \frac{1}{M} + M \left(\frac{3}{2}\right)^s |\cA(s,M)|\, 2^sF_n,
\end{align}
where
\begin{align}\label{eq:deffn}
F_n := \sum_{\bsl \in \cG_{n,s}^{\bot} \setminus \{\bszero\}}\omega^{\sum_{j=1}^s a_j (l_j/2)^{b_j}}.
\end{align}
Furthermore,
\begin{align*}
|\cA(s,M)|\leq\prod_{j=1}^s\left(1+\left(\frac{\log M}{a_j \log \omega^{-1}}\right)^{1/b_j}\right)&\le \prod_{j=1}^s\left(1+\left(\frac{\log M}{ \log \omega^{-1}}\right)^{1/b_j}\right)\\
&\le \prod_{j=1}^s\left(1+ \log^{-1/b_j}\omega^{-1}\right)\prod_{j=1}^s\left(1+\log^{1/b_j}M\right).
\end{align*}
Since $M$ is assumed to be at least 1, we can bound  $1+\log^{1/b_j}M\le 2M^{1/b_j}$, and obtain
\begin{align*}
|\cA(s,M)|\le 2^s M^{B(s)} \prod_{j=1}^s \left(1+ \log^{-1/b_j} \omega^{-1}\right),
\end{align*}
where, as in the previous sections, $B(s):=\sum_{j=1}^s b_j^{-1}$. Plugging this into \eqref{gen_approx_bd}, we obtain
\begin{align}\label{eqboundMapp2}
[e(A_{n,s,M},\APP_s;\lstd)]^2\le\frac{1}{M}+ M^{B(s)+1} D(s,\omega,\bsb) F_n,
\end{align}
where
\begin{align*}
D(s,\omega,\bsb):=6^s \prod_{j=1}^s\left(1+  \log^{-1/b_j}\omega^{-1}\right).
\end{align*}

\subsection{The main results for $L_2$-approximation}\label{secmainapp}

The following theorem gives necessary and sufficient conditions on the weight sequences $\bsa$ and $\bsb$ for (uniform) exponential convergence, and the notions of EC-WT, EC-PT, and EC-SPT.

\begin{theorem}\label{thm:appmain}
Consider $L_2$-approximation defined over $\cHcos$ with weight sequences $\bsa$ and $\bsb$. The following results hold for both classes $\lall$ and $\lstd$.
\begin{enumerate}
\item\label{it:appexp}
EXP holds for arbitrary $\bsa$ and $\bsb$ and $p^{*}(s)=1/B(s)$ with $B(s):=\sum_{j=1}^s\frac{1}{b_j}$.
\item\label{it:appuexp}
UEXP holds iff $\bsa$ is an arbitrary sequence and $\bsb$ such that $B:=\sum_{j=1}^\infty\frac1{b_j}<\infty$.
If so then $p^*=1/B$.
\item\label{it:appwt}
We have EC-WT iff $\lim_{j\to\infty}a_j=\infty$.
%\item\label{allequiv}
%The following notions are equivalent:
%$$\mbox{EC-PT}\Leftrightarrow\mbox{EC-PT+EXP} \Leftrightarrow
%\mbox{EC-PT+UEXP}\Leftrightarrow\mbox{EC-SPT}\Leftrightarrow
%\mbox{EC-SPT+EXP}\Leftrightarrow\mbox{EC-SPT+UEXP}.
%$$
\item \label{it:appspt}
EC-PT holds iff EC-SPT holds iff
\begin{align*}
B:=\sum_{j=1}^\infty\frac1{b_j}<\infty\quad\textnormal{and}\quad \alpha^*:=\liminf_{j\to\infty}\frac{\log\,a_j}j>0.
\end{align*}
Then the exponent $\tau^*$ of EC-SPT satisfies
\begin{align*}
\max\left(B,\frac{\log\,2}{\alpha^*}\right)\le \tau^*\le B+\frac{\nu}{\alpha^*},
\end{align*}
where $\nu=\log\,2$ for $\lall$ and $\nu=\log\,3$ for $\lstd$.
In particular, if $\alpha^* =\infty$ then $\tau^* = B$.
\end{enumerate}
\end{theorem}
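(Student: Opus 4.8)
I would first record two reductions that I use repeatedly. Since $\lstd\subset\lall$ we have $e(n,\APP_s;\lall)\le e(n,\APP_s;\lstd)$, hence $n(\eps,\APP_s;\lall)\le n(\eps,\APP_s;\lstd)$ for all $n,s$ and $\eps$; consequently I prove every upper bound for the class $\lstd$ and every lower bound for the class $\lall$, and each then transfers to the other class. For $\lstd$ the main vehicle is the grid algorithm $A_{n,s,M}$ together with the estimate \eqref{eqboundMapp2} and the quantity $F_n$ from \eqref{eq:deffn}. For $\lall$ I use the singular value decomposition: the orthonormal system $\{e_{\bsk}\}_{\bsk\in\NN_0^s}$ diagonalizes $\APP_s^\ast\APP_s$ with eigenvalues $\{\omega_{\bsk}\}_{\bsk\in\NN_0^s}$, so $e(n,\APP_s;\lall)^2$ is the $(n+1)$st largest weight and
\[
n(\eps,\APP_s;\lall)=\Big|\Big\{\bsk\in\NN_0^s:\textstyle\sum_{j=1}^s a_jk_j^{b_j}\le T\Big\}\Big|,\qquad T:=\frac{2\log\eps^{-1}}{\log\omega^{-1}} .
\]
Thus the whole theorem reduces to two–sided estimates of this counting function, together with the balancing of \eqref{eqboundMapp2}. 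The monotonicity $a_1\le a_2\le\cdots$ will be decisive in all lower bounds.

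\textbf{Points \ref{it:appexp} and \ref{it:appuexp}.} For the upper bounds I run $A_{n,s,M}$ with the same mesh sizes $n_j\asymp m^{1/(B(s)b_j)}$ as in the proof of Point~\ref{it:intexp} of Theorem~\ref{thm:intmain}; estimating $F_n$ exactly as in Proposition~\ref{prop:wcints} gives $F_n\le\omega^{c\,m^{1/B(s)}}$ up to $s$–dependent constants, and balancing the two terms of \eqref{eqboundMapp2} via $M^{B(s)+2}\asymp(D(s,\omega,\bsb)F_n)^{-1}$ produces exponential decay in $n\asymp m$ at rate $1/B(s)$. This yields EXP for both classes with $p^\ast(s)\ge 1/B(s)$, and, replacing $B(s)$ by $B$ when $B<\infty$, UEXP with $p^\ast\ge1/B$. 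For the matching lower bound I bound the counting function below by a lattice–point/volume estimate $c_s\,T^{B(s)}$; inserting this into the eigenvalue representation forces $p^\ast(s)\le 1/B(s)$, and since any uniform rate of UEXP is $\le p^\ast(s)=1/B(s)$ for every $s$, letting $s\to\infty$ gives $p^\ast\le1/B$ and shows UEXP fails when $B=\infty$. This proves both points.

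\textbf{Points \ref{it:appwt} and \ref{it:appspt}.} All lower bounds come from the eigenvalue count and the monotonicity of $\bsa$. For Point~\ref{it:appwt}, if $\bsa$ is bounded, say $a_j\le A$, then the $2^s$ multi–indices $\bsk\in\{0,1\}^s$ give weights $\omega_{\bsk}\ge\omega^{As}$, so $n(\eps,\APP_s;\lall)\ge2^s$ for $\eps$ just below $\omega^{As/2}$; then $\log n/(s+\log\eps^{-1})$ stays bounded away from $0$, contradicting EC-WT. For the necessity in Point~\ref{it:appspt} the same $2^s$ weights, now $\ge\omega^{\sum_{j\le s}a_j}\ge\omega^{sa_s}$, combined with EC-PT along a subsequence realizing the $\liminf$ in $\alpha^\ast$, force $s\log2\le\tau\log(sa_s)+o(s)$; this is impossible if $\alpha^\ast=0$ and otherwise yields $\tau^\ast\ge\log2/\alpha^\ast$, while the volume lower bound $c_sT^{B(s)}$ gives $\tau^\ast\ge B$ on letting first $\eps\to0$ and then $s\to\infty$; finally EC-PT $\Rightarrow$ UEXP $\Rightarrow B<\infty$ as in \cite{KPW14}. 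For the sufficiency direction I use the effective dimension: when $\alpha^\ast>0$ one has $a_j\ge c\,\ee^{(\alpha^\ast-\eta)j}$, so only $d=O((\alpha^\ast)^{-1}\log\log\eps^{-1})$ coordinates satisfy $a_j\le T$ and all others are inactive. Restricting to these $d$ coordinates turns every curse factor $\gamma^s$ in \eqref{eqboundMapp2} into $\gamma^d=(\log\eps^{-1})^{O(\log\gamma/\alpha^\ast)}$, which is independent of $s$; counting the active eigenvalues gives $\tau\le B+\log2/\alpha^\ast$ for $\lall$, and the analogous balancing of \eqref{eqboundMapp2} gives $\tau\le B+\log3/\alpha^\ast$ for $\lstd$. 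The same effective–dimension idea, with $d=|\{j\le s:a_j\le T\}|$ now merely finite (because $a_j\to\infty$), yields $\log n=o(s+\log\eps^{-1})$ and hence the sufficiency in Point~\ref{it:appwt}.

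\textbf{Main obstacle.} The clean parts are the eigenvalue lower bounds and the $\lall$ upper bounds, where the counting function is estimated directly. The hard part is the $\lstd$ sufficiency, and specifically the sharp EC-SPT upper bound: one must control the tensor–product aliasing factors (the $(3/2)^s$ and $2^s$ entering $D(s,\omega,\bsb)$ and $|\cA(s,M)|$) uniformly in $s$ by collapsing them onto the $s$–independent effective dimension $d$, while simultaneously balancing $M$ and the mesh sizes so as to reach the sharp exponent $\nu=\log3$ rather than the crude $\log6$ coming from $6^s=D(s,\omega,\bsb)/\prod_j(1+\log^{-1/b_j}\omega^{-1})$. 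The EC-WT sufficiency for slowly growing $\bsa$ is the second delicate point: there $d$ is not bounded in $\eps$, and one must verify the uniform–in–$s$ bound $\log n=o(s+\log\eps^{-1})$ carefully, since a crude Chernoff estimate of the counting function is not tight enough in the regime $s\asymp\log\eps^{-1}$.
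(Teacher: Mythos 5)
Your overall architecture---upper bounds for $\lstd$ via the grid algorithm $A_{n,s,M}$, lower bounds for $\lall$ via the eigenvalue counting function, and transfer between the two classes---is sound, and for Points \ref{it:appexp} and \ref{it:appuexp} as well as for all necessity directions in Points \ref{it:appwt} and \ref{it:appspt} your sketch is correct; it is in fact more self-contained than the paper, which delegates the entire $\lall$ case to \cite{IKPW15} and EC-WT for $\lstd$ to \cite{DKPW13,IKPW15a}. The genuine gaps are in the two sufficiency directions, which you yourself flag as ``obstacles'' but then assert rather than resolve. For EC-SPT with $\lstd$ (Point \ref{it:appspt}): no choice of $M$ and mesh sizes can extract a dimension-free bound from \eqref{eqboundMapp2}, because $D(s,\omega,\bsb)\ge 6^s$ is baked into that inequality, and ``restricting to the effective dimension'' is not an operation one can perform on \eqref{eqboundMapp2} after the fact. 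One must redo the error analysis upstream, at \eqref{errfhab2}, where the factor $(3/2)^s$ originates from the aliasing multiplicity $2^{s-\vert\uu\vert}$ of the coefficients $\widetilde{f}(\bsh(\pm)_{\uu}\bsl)$. The paper's key device, absent from your outline, is to take every mesh size odd, $n_j=2\lceil((\log M)/(a_j^{\beta}\log\omega^{-1}))^{1/b_j}\rceil-1$, and to prove that for such meshes no coefficient $\widetilde{f}(\bsh(\pm)_{\uu}\bsl)$ with $\bsl\in\cG_{n,s}^{\bot}$, $\bsh\in\cA(s,M)$, is repeated, so the multiplicity drops from $2^{s-\vert\uu\vert}$ to $1$; this is then combined with the bound $|\cA(s,M)|\le 2^{j^*_{\beta,\delta,M}}(1+\log M/\log\omega^{-1})^{B+(\log 2)/\delta}$ from \cite{IKPW15} and with the convergence of $\sum_j M^{-a_j^{1-\beta}}$, both exploiting $a_j\ge\exp(\delta j)$. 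Without this non-aliasing lemma, or an equivalent multiplicity collapse (for instance, observing that the multiplicity is at most $2^{|\{j\notin\uu\,:\,h_j\neq 0\}|}$, which for $\bsh\in\cA(s,M)$ is polylogarithmic in $M$ once $\alpha^*>0$), the exponent $B+(\log 3)/\alpha^*$ is out of reach.

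For EC-WT sufficiency (Point \ref{it:appwt}) your proposed argument would fail, and in precisely the regime you worry about. If $a_j\to\infty$ arbitrarily slowly, then $d_\infty(T):=|\{j\,:\,a_j\le T\}|$ can be superexponential in $T$, and for $s\asymp d_\infty(T)$ your count $\log n\le d\,\log(1+T^{1/b_*})$ is of size $s\log T$, which is \emph{not} $o(s+\log\eps^{-1})$. Moreover, your dismissal of the Chernoff estimate is backwards: it is exactly the tool that works for $\lall$. For any $\theta>0$ one has $n(\eps,\APP_s;\lall)\le \omega^{-\theta T}\prod_{j=1}^s\bigl(1+C_\theta\,\omega^{\theta a_j}\bigr)$ with $C_\theta:=\sum_{k\ge1}\omega^{\theta a_*(k^{b_*}-1)}<\infty$, hence $\log n(\eps,\APP_s;\lall)\le 2\theta\log\eps^{-1}+C_\theta\sum_{j=1}^s\omega^{\theta a_j}$; since $a_j\to\infty$ the last sum is $o(s)$, and letting $\theta\to0$ gives EC-WT for $\lall$, uniformly over all regimes of $s$ versus $\log\eps^{-1}$. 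For $\lstd$ even this does not suffice, since eigenvalue counting only bounds $n(\eps,\APP_s;\lstd)$ from below; an algorithmic argument is required, and, as noted above, \eqref{eqboundMapp2} with its $6^s$ cannot deliver it---this is why the paper invokes the constructions of \cite[Subsection 8.2]{DKPW13} and \cite{IKPW15a} at this point.
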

\begin{proof}
{\bf Proof for the class $\lall$:} For the class $\lall$ the theorem was proven in a more general setting in \cite{IKPW15}.

\vspace{0.25cm}
\noindent {\bf Proof of Point \ref{it:appexp} for the class $\lstd$:}
To show EXP we will use \eqref{eqboundMapp2}. For $s\in \NN$ and $\eps\in(0,1)$ define
\begin{align*}
m=\max_{j=1,2,\dots,s}\ \left\lceil \left(\frac{2^{b_j}}{a_j}\,\frac{\log\left(1+\frac{sR}{\log(1+\eta^2)}\right)}{\log\,\omega^{-1}}\right)^{B(s)}\,\right\rceil,
\end{align*}
where
\begin{align*}
\eta=\left(\frac{\eps^2}{2D(s,\omega,\bsb)^{\frac{1}{B(s)+2}}}\right)^{\frac{B(s)+2}{2}}\qquad\textnormal{and}\qquad R = \max_{1 \le j\le s} \sum_{h=1}^\infty \omega^{a_j 2^{-b_j} (h^{b_j}-1)}<\infty.
\end{align*}
Let $n=\prod_{j=1}^s n_j$ with $n_j:=\left\lfloor m^{1/(B(s)  b_j)}\right\rfloor$ for $j=1,2,\ldots,s$. Now we can write for $F_n$, given by \eqref{eq:deffn},
\begin{align*}
F_n=\sum_{\bsl \in \cG_{n,s}^{\bot} \setminus \{\bszero\}}\omega^{\sum_{j=1}^s a_j\abs{\ell_j/2}^{b_j}}=-1+\prod_{j=1}^s\left(1+\sum_{h=1}^\infty \omega^{a_j 2^{-b_j}(2n_j h)^{b_j}}\right).
\end{align*}
Since $\lfloor x\rfloor\ge x/2$ for all $x\ge1$, we have
\begin{align*}
(2n_jh)^{b_j}\ge h^{b_j}\,m^{1/B(s)}\qquad\textnormal{for all}\qquad j=1,2,\dots,s.
\end{align*}
Hence,
\begin{align*}
F_n \le -1+\prod_{j=1}^s\left(1+\sum_{h=1}^{\infty} \omega^{m^{1/B(s)} a_j\, (h/2)^{b_j}}\right).
\end{align*}
We further estimate
\begin{align*}
\sum_{h=1}^{\infty} \omega^{m^{1/B(s)} a_j\,(h/2)^{b_j}}&=\omega^{m^{1/B(s)} a_j\,2^{-b_j}}\sum_{h=1}^{\infty} \omega^{m^{1/B(s)} a_j\,2^{-b_j}\,(h^{b_j}-1)}
%&\le \omega^{m^{1/B(s)} a_j 2^{1-b_j}}\sum_{h=1}^{\infty} \omega^{a_j 2^{1-b_j}\,(h^{b_j}-1)}\\
\le \omega^{m^{1/B(s)} a_j 2^{-b_j}}R.
\end{align*}
From the definition of $m$ we have $\omega^{m^{1/B(s)} a_j 2^{-b_j}}R\leq\log(1+\eta^2)/s$ for all $j=1,2,\dots,s$. This proves
\begin{align}\label{eqFnbound}
F_n\le-1+\left(1+\frac{\log(1+\eta^2)}{s}\right)^s\le -1+\exp(\log(1+\eta^2))=\eta^2.
\end{align}
Now, plugging this into \eqref{eqboundMapp2}, we obtain
\begin{align}\label{eqboundMapp3}
e(A_{n,s,M},\APP_s;\lstd)^2\le\frac{1}{M}+ M^{B(s)+1} D(s,\omega,\bsb) \eta^2.
\end{align}
Because of the choice of $\eta$ we can choose $M$ as
\begin{align*}
M=\frac{1}{D(s,\omega,\bsb)^{\frac{1}{B(s)+2}} \eta^{\frac{2}{B(s)+2}}}=\frac{2}{\eps^2}\ge 1,
\end{align*}
which yields, inserting into \eqref{eqboundMapp3},
\begin{align*}
e(A_{n,s,M},\APP_s;\lstd)^2\leq 2D(s,\omega,\bsb)^{\frac{1}{B(s)+2}} \eta^{\frac{2}{B(s)+2}}=\eps^2.
\end{align*}
Furthermore, note that
\begin{align*}
n=\prod_{j=1}^sn_j=\prod_{j=1}^s \left\lfloor m^{1/(B(s) b_j)} \right\rfloor \le m^{\frac{1}{B(s)}\sum_{j=1}^s 1/b_j} = m=\cO\left(\log^{B(s)}\left (1 +\eta^{-1}\right)\right),
\end{align*}
as $\eta$ tends to zero, with the factor in the $\cO$ notation independent of $\eps^{-1}$ but dependent on $s$, see \cite[Proof of Theorem~3]{KPW14}. Now it follows that
\begin{align*}
n(\eps,\APP_s;\lstd)=\cO\left(\log^{B(s)}\left (1 +\eps^{-1}\right)\right).
\end{align*}
This implies that we indeed have exponential convergence for $\lstd$ for all $\bsa$ and $\bsb$, with $p(s)=1/B(s)$, and thus $p^* (s)\geq 1/B(s)$. On the other hand, note that obviously $e(n,\APP_s;\lstd)\geq e(n,\APP_s;\lall)$, hence the rate of exponential convergence for $\lstd$ cannot be larger than for $\lall$, which is $1/B(s)$. Thus, also for the class $\lstd$ we have $p^*(s)= 1/B(s)$.

\vspace{0.25cm}
\noindent {\bf Proof of Point \ref{it:appuexp} for the class $\lstd$:} Suppose first that $\bsa$ is an arbitrary sequence and that $\bsb$ is such that $B=\sum_{j=1}^\infty 1/b_j<\infty$. Then we can derive in the same way as above, by replacing $B(s)$ by $B$, that
\begin{align*}
n(\eps,\APP_s;\lstd)=\cO\left(\log^{B}\left (1 +\eps^{-1}\right)\right),
\end{align*}
hence UEXP with $p^*\geq1/B$ holds. On the other hand, if we have uniform exponential convergence for $\lstd$, this implies UEXP for $\lall$, which in turn implies that $B<\infty$ and that $p^*\leq 1/B$.

\vspace{0.25cm}
\noindent {\bf Proof of Point \ref{it:appwt} for the class $\lstd$:} This assertion can be proven in the same way as in \cite[Subsection 8.2]{DKPW13} and \cite[Proof of Point 3]{IKPW15a}, because these proofs just depend on the form of the weights $\omega_{\bsk}$.

\vspace{0.25cm}
\noindent {\bf Proof of Point \ref{it:appspt} for the class $\lstd$:} Suppose that EC-PT holds for the class $\lstd$. Then EC-PT also holds for the class $\lall$, which in turn implies that $B<\infty$ and $\alpha^*>0$. Moreover, it is trivial that EC-SPT implies EC-PT. So it remains to show the sufficiency of the conditions on $\bsa$ and $\bsb$ for EC-SPT.

To this end, we analyze the algorithm $A_{n,s,M}$ given by~\eqref{eqdefAnsM}, where the sample points $\bsx_k$ form a midpoint rule with
\begin{align*}
n_j=2\,\left\lceil\,\left(\frac{\log\,M}{a_j^{\beta}\log\,\omega^{-1}}\right)^{1/b_j}\right\rceil\,-\,1\qquad\textnormal{for all}\qquad j=1,2,\dots,s.
\end{align*}
Here $M>1$ and $\beta\in(0,1)$. Note that $n_j\ge1$ and is always an odd number. Furthermore $n_j=1$ if $a_j\ge ((\log M)/(\log \omega^{-1}))^{1/\beta}$. Assume that $\alpha^* \in (0,\infty]$. Since for all $\delta\in(0,\alpha^*)$ we have for some $j_\delta^*\in\NN$,
\begin{align*}
a_j\ge \exp(\delta j) \quad\textnormal{for all}\quad j\ge j^*_\delta,
\end{align*}
see~\cite[Eq. (1)]{DKPW13}, we conclude that
\begin{align*}
j\ge j^{*}_{\beta,\delta,M}:=\max\left(j^*_\delta,\frac{\log(((\log M)/(\log\omega^{-1}))^{1/\beta})}{\delta}\right)\quad\textnormal{implies}\quad n_j=1.
\end{align*}

%\TODO{
%Let $\uu\subseteq[s]$, $\bsh\in\cA (s,M)$ and assume $\bsl^{(1)}, \bsl^{(2)}\in\cG_{n,s}^\perp$, $\bsl^{(1)}\neq\bsl^{(2)}$, are such that $\bsh(\pm)_{\uu}\bsl^{(1)}=\bsh(\pm)_{\uu}\bsl^{(2)}$. From the definition of $(\pm)_{\uu}$, we see that this can happen if and only if $l_j^{(1)}\neq l_j^{(2)}$ for some $j\notin\uu$. For $j\notin\uu$ and the corresponding components $l_j^{(1)}, l_j^{(2)}$ of $\bsl^{(1)}, \bsl^{(2)}\in\cG_{n,s}^\perp$, assume  $\vert h_j-l_j^{(1)}\vert = \vert h_j-l_j^{(2)}\vert$ and note that $l_j^{(1)}\neq l_j^{(2)}$ if and only if $l_j^{(1)}<h_j<l_j^{(2)}$ or $l_j^{(2)}<h_j<l_j^{(1)}$. Without loss of generality we assume that $l_j^{(1)}<h_j<l_j^{(2)}$. It follows that
%\begin{align*}
%h_j-l_j^{(1)}=-h_j+l_j^{(2)}
%\end{align*}
%or, equivalently,
%\begin{align*}
%2h_j=l_j^{(1)}+l_j^{(2)}.
%\end{align*}
%Since $\bsl^{(1)}, \bsl^{(2)}\in\cG_{n,s}^\perp$, we know that there exist $k^{(1)}_j, k^{(2)}_j\in\NN_0$ with $l_j^{(1)}=2k^{(1)}_j n_j$ and $l_j^{(2)}=2k^{(2)}_j n_j$. Thus, we have that $2h_j=2n_j(k_j^{(1)}+k_j^{(2)})$ which implies that $h_j\ge n_j$, as not both $k^{(1)}_j$ and  $k^{(2)}_j$ can be zero. Similar to \cite[p.~23]{DKPW13}, it is easy to see that for $\bsh\in \cA(s,M)$ we have $h_j<(n_j+1)/2$ for all $j=1,2,\dots,s$. Thus, we obtain
%\begin{align*}
%n_j\le h_j<(n_j+1)/2
%\end{align*}
%which is a contradiction as $n_j\geq 1$ for all $j=1,\ldots,s$. }

Similar to \cite[p.\ 25]{IKPW15a} it can be shown that for this choice of the $n_j$ it cannot happen that there exist $\bsl^{(1)}, \bsl^{(2)}\in\cG_{n,s}^\perp$ with $\bsl^{(1)}\neq \bsl^{(2)}$ such that $\bsh(\pm)_{\uu}\bsl^{(1)}=\bsh(\pm)_{\uu}\bsl^{(2)}$ for any $\uu\subseteq[s]$ and $\bsh\in\cA (s,M)$. Consequently, each coefficient $\widetilde{f}(\bsh(\pm)_{\uu}\bsl)$ in \eqref{errfhab2} occurs at most once and so we obtain
\begin{align}\label{errfhab3}
\left|\int_{[0,1]^s} f_{\bsh}(\bsx)\rd\bsx-\frac{1}{n}\sum_{k=1}^{n} f_{\bsh}(\bsx_k) \right|^2 \le \|f\|^2_{\Kcos} \left(\frac{1}{2^s} \sum_{\uu \subseteq [s]} \sum_{\bsl \in \cG_{n,s}^{\bot} \setminus \{\bszero\}} 2^{|\bsl|_*} \omega_{\bsh (\pm)_{\uu} \bsl}\right).
\end{align}
Thus, \eqref{errfhab3} together with \eqref{approx_err}, \eqref{approx_err1} gives
\begin{align*}
e_{n,s}^2:=e(A_{n,s,M},\APP_s;\lstd)^2\le \frac1M\,+\,\sum_{\bsh\in\cA(s,M)}\ \frac{1}{2^s} \sum_{\vv\subseteq [s]}\sum_{\bsl\in\cG_{n,s}^\perp\setminus\{\bszero\}}2^{\abs{\bsl}_*}\omega_{\bsh (\pm)_{\vv}\bsl}.
\end{align*}

We now estimate, for a fixed $\vv\subseteq [s]$,
\begin{align*}
\sum_{\bsl \in \cG^\perp_{n,s} \setminus \{\bszero\} } 2^{\abs{\bsl}_*}\omega_{\bsh (\pm)_{\vv}\bsl}=  \sum_{\emptyset \neq \uu \subseteq [s]} \prod_{j\in \uu}\left( \sum_{\ell =1}^\infty 2\omega^{a_j(h_j (\pm)_{\vv}2\ell n_j)^{b_j}}\right)\, \prod_{j \not \in \uu}\omega^{a_jh_j^{b_j}},
\end{align*}
where we separated the cases for $\ell \neq 0$ and $\ell=0$, and where
\begin{align*}
h_j (\pm)_{\vv}2\ell n_j=\begin{cases} h_j + 2\ell n_j & \textnormal{if } j \in \vv,\\  |h_j-2 \ell n_j| & \textnormal{if } j \not\in \vv.\end{cases}
\end{align*}
We estimate the second product by one so that
\begin{align*}
\sum_{\bsl \in \cG^\perp_{n,s} \setminus \{\bszero\} } 2^{\abs{\bsl}_*}\omega_{\bsh (\pm)_{\vv}\bsl}\le \sum_{\emptyset \neq \uu \subseteq [s]} \prod_{j\in \uu}\left( \sum_{\ell =1}^\infty 2\omega^{a_j(h_j (\pm)_{\vv}2\ell n_j)^{b_j}}\right).
\end{align*}
Recall that for $\bsh\in\cA(s,M)$ we have $h_j<(n_j+1)/2$ for all $j=1,2,\dots,s$. In particular, if $n_j=1$ then $h_j=0$ and
\begin{align}\label{ineqhone}
\sum_{\ell=1}^\infty 2\omega^{a_j(h_j (\pm)_{\vv}2\ell n_j)^{b_j}} =2\sum_{\ell=1}^\infty\omega^{a_j(2\ell)^{b_j}}\le 2\omega^{a_j}\sum_{\ell=1}^\infty\omega^{a_j\left((2\ell)^{b_j} -1\right)}
\le 2\omega^{a_j}\sum_{\ell=1}^\infty\omega^{a_*\left((2\ell)^{b_*} -1\right)}  = 2\omega^{a_j}A_1,
\end{align}
where $A_1$ is a constant independent of $s$.

Let $n_j\ge3$. Then  $h_j<(n_j+1)/2$. Since $h_j\ge 0$ and $(n_j+1)/2$ is a positive integer, we conclude that $h_j\le (n_j+1)/2-1=(n_j-1)/2$, and so, since $\ell\neq 0$,
\begin{align*}
h_j (\pm)_{\vv}2\ell n_j= \begin{cases} h_j + 2\ell n_j & \mbox{if }j\in\vv,\\ 2\ell n_j - h_j & \mbox{if }j\notin\vv.\end{cases}
\end{align*}
In any case we obtain
\begin{align*}
h_j (\pm)_{\vv}2\ell n_j\ge 2\ell n_j - h_j\ge 2\ell n_j - \frac{n_j -1}{2} \ge \frac{n_j +1}{2} \ell.
\end{align*}
Therefore, as $n_j\ge 3$,
\begin{align}\label{ineqhtwo}
\sum_{\ell =1}^\infty 2\omega^{a_j(h_j (\pm)_{\vv}2\ell n_j)^{b_j}}&\le 2\sum_{\ell=1}^\infty\omega^{a_j[(n_j+1)/2]^{b_j}\ell^{b_j}}\nonumber\\
&\le\omega^{a_j[(n_j+1)/2]^{b_j}} 2\sum_{\ell=1}^\infty \omega^{a_j[(n_j+1)/2]\left(\ell^{b_j}-1\right)}\nonumber\\
&\le\omega^{a_j[(n_j+1)/2]^{b_j}} 2A_2,
\end{align}
where $A_2$ is another constant independent of $s$.

The inequalities \eqref{ineqhone} and \eqref{ineqhtwo} can be combined to
\begin{align*}
\beta_j:=\sum_{\ell =1}^\infty 2\omega^{a_j(h_j (\pm)_{\vv}2\ell n_j)^{b_j}}\le \omega^{a_j[(n_j+1)/2]^{b_j}} A,
\end{align*}
where $A=2 \max\{A_1,A_2\}$ is a constant independent of $s$.

Note that
\begin{align*}
\sum_{\emptyset \neq \uu \subseteq [s]} \prod_{j\in \uu}\left(\sum_{\ell=1}^\infty 2\omega^{a_j(h_j (\pm)_{\vv}2\ell n_j)^{b_j}}\right)=-1+\sum_{\uu \subseteq [s]} \prod_{j\in \uu} \beta_j=-1+\prod_{j=1}^s(1+\beta_j).
\end{align*}
Consequently,
\begin{align*}
e_{n,s}^2\le \frac{1}{M}+ |\cA(s,M)| \left(-1 + \prod_{j=1}^s \left(1 + \omega^{a_j[(n_j+1)/2]^{b_j}} A\right)\right).
\end{align*}
Using $\log (1+x) \le x$ we obtain
\begin{align*}
\log\left[ \prod_{j=1}^s \left(1 + \omega^{a_j[(n_j+1)/2]^{b_j}} A \right)\right] \le  A \sum_{j=1}^s\omega^{\,a_j[(n_j+1)/2]^{b_j}}=:\gamma.
\end{align*}
From the definition of $n_j$ we have $a_j[(n_j+1)/2]^{b_j}\ge a_j^{1-\beta}\,(\log\,M)/\log\,\omega^{-1}$. Therefore
\begin{align*}
\omega^{a_j[(n_j+1)/2]^{b_j}}\le \omega^{a_j^{1-\beta}\,(\log\,M)/\log\,\omega^{-1}}= \left(\frac1M\right)^{a_j^{1-\beta}}.
\end{align*}

Without loss of generality, we assume $M\ge 2$. Since $a_j\ge 1$ for $j\le j^*_{\beta,\delta,M}-1$ and $a_j\ge \exp(\delta j)$ for $j\ge j^*_{\beta,\delta,M}$ we obtain
\begin{align*}
\gamma\le A\,\left(\frac{j^*_{\beta,\delta,M}-1}{M}+\sum_{j=j^*_{\beta,\delta,M}}^\infty\left(\frac1M\right)^{\exp((1-\beta) \delta j)}\right).
\end{align*}
Note that there exists a constant $C>0$ such that $j^*_{\beta,\delta,M}\leq (\log\log M)\,j^*_{\beta,\delta}$ with $j^*_{\beta,\delta}:=C \max(j^*_{\delta}, (1-\log\log \tilde{\omega}^{-1})/(\delta\beta))$. Thus
\begin{align*}
\gamma\le A\,\left(\frac{(\log\log M)j^*_{\beta,\delta}-1}{M}+\sum_{j=0}^\infty\left(\frac{1}{M}\right)^{\exp((1-\beta) \delta j)}\right)\le\frac{C_{\beta,\delta}}{M^{1/2}},
\end{align*}
with
\begin{align*}
C_{\beta,\delta}:=A\,\left(j^*_{\beta,\delta}-1+\sum_{j=0}^\infty\left(\frac{1}{2}\right)^{\exp((1-\beta) \delta j)-1}\right)<\infty,
\end{align*}
where we made use of $M\ge 2$. Note that for $M\ge C_{\beta,\delta}^2$ we have $\gamma\le 1$.

Using convexity we easily check that $-1+\exp(\gamma) \le (\mathrm{e}-1)\gamma$ for all $\gamma\in[0,1]$. Thus for $M\ge C_{\beta,\delta}^2$ we obtain
\begin{align*}
-1 + \prod_{j=1}^s \left(1 +A\,\omega^{\,a_j\left(\frac{n_j+1}{2}\right)^{b_j}}\right) \le -1+\exp(\gamma)\le (\mathrm{e}-1)\gamma\le\frac{C_{\beta,\delta}\,(\mathrm{e}-1)}{M^{1/2}}.
\end{align*}
We now turn to $|\cA(s,M)|$. From the proof of \cite[Theorem 9]{IKPW15} we obtain
\begin{align*}
|\cA(s,M)|\le2^{j^*_{\beta,\delta,M}} \left(1+\frac{\log\,M}{\log\,\omega^{-1}}\right)^{B+(\log 2)/\delta}.
\end{align*}
Therefore
\begin{align*}
e^2_{n,s}\le \frac{1}{M}\,\left[1+C_{\beta,\delta}(\mathrm{e}-1)2^{j^*_{\beta,\delta,M}} \left(1+\frac{\log\,M}{\log\,\omega^{-1}}\right)^{B+(\log 2)/\delta}\right]\le \frac{D_{\beta,\delta}}{M^{1/2}},
\end{align*}
where
\begin{align*}
D_{\beta,\delta}:=\sup_{x\ge C_{\beta,\delta}}\left(\frac1{x^{1/2}}+\frac{C_{\beta,\delta}(\mathrm{e}-1)(\log x)^{j^*_{\beta,\delta}\log 2}}{x^{1/2}}\,\left(1+\frac{\log\,x}{\log\,\omega^{-1}}\right)^{B+(\log 2)/\delta}\right)<\infty.
\end{align*}
Hence for $M=\max\left(C_{\beta,\delta}^2,D_{\beta,\delta}^{2}\,\eps^{-4},2\right)$ we have $e_{n,s}\leq\eps$. Now we estimate the number $n$ of function values used by the algorithm $A_{n,s,M}$. We have
\begin{align*}
n&=\prod_{j=1}^s n_j=\prod_{j=1}^{\min(s,j^*_{\beta,\delta,M})}n_j\le \prod_{j=1}^{\min(s,j^*_{\beta,\delta,M})}\left(1+2\left(\frac{\log\,M}{a_j^{\beta}\,\log\,\omega^{-1}}\right)^{1/b_j}\right)\\
&\le 3^{j^*_{\beta,\delta}}\,\left(\frac{\log\,M}{\log\,\omega^{-1}}\right)^B=\cO\left(\left(1+\log\,\eps^{-1}\right)^{B+(\log 3)/(\beta\,\delta)}\right),
\end{align*}
where the factor in the big $\cO$ notation depends only on $\beta$ and $\delta$. This proves SPT with $\tau=B+\log 3 /(\beta\,\delta)$. Since $\beta$ can be arbitrarily close to one, and $\delta$ can be arbitrarily close to $\alpha^*$,the exponent $\tau^*$of SPT is at most $B+\frac{\log 3}{\alpha^*}$, where for $\alpha^*=\infty$ we have $\frac{\log 3}{\alpha^*} = 0$.

\end{proof}

\begin{remark}
The only point where we need the monotonicity of the sequence $\bsa$ is where we show that EC-PT implies $\alpha^{\ast}>0$.
\end{remark}

\begin{small}
\noindent\textbf{Authors' addresses:}
\\ \\
\noindent Christian Irrgeher, Friedrich Pillichshammer,
\\
Department of Financial Mathematics and Applied Number Theory,
Johannes Kepler University Linz, Altenbergerstr.~69, 4040 Linz, Austria\\
\\
\noindent Peter Kritzer,
\\
Johann Radon Institute for Computational and Applied Mathematics (RICAM),
Austrian Academy of Sciences, Altenbergerstr.~69, 4040 Linz, Austria.\\

\noindent \textbf{E-mail:} \\
\texttt{christian.irrgeher(AT)jku.at}\\
\texttt{peter.kritzer(AT)oeaw.ac.at}\\
\texttt{friedrich.pillichshammer(AT)jku.at} \\
\end{small}

\end{document}